\theoremstyle{plain} \newtheorem{theorem}{Theorem}[section]
\newtheorem{lemma}[theorem]{Lemma}
\theoremstyle{definition}
\newtheorem{definition}[theorem]{Definition}
\newtheorem{example}[theorem]{Example}
\newtheorem{remark}[theorem]{Remark}
\newtheorem{notation}[theorem]{Notation}
\DeclareMathOperator{\dimh}{dim_H}
\DeclareMathOperator{\T}{\mathbb{T}}
 \date{}
\begin{document}
\title{On recurrence sets for toral endomorphisms }

\subjclass[2010]{37C45, 37D20, 28A80}

\author{Zhang-nan Hu*}\thanks{* Corresponding author}  \address{Z.-N.~Hu, College of Science,
  China University of Petroleum, Beijing 102249, China}
\email{hnlgdxhzn@163.com}

\author{Tomas Persson} \address{T.~Persson, Centre for
  Mathematical Sciences, Lund University, Box~118, 221~00~Lund,
  Sweden} \email{tomasp@gmx.com}

\date{\today}

\begin{abstract}
  Let $A$ be a $2\times 2$ integral matrix with an eigenvalue of
  modulus strictly less than 1. Let $T$ be the natural
  endomorphism on the torus
  $\mathbb{T}^2=\mathbb{R}^2/\mathbb{Z}^2$, induced by $A$. Given
  $\tau>0$, let
  \[
    R_\tau =\{\, x\in \mathbb{T}^2 : T^nx\in
    B(x,e^{-n\tau})~\mathrm{infinitely ~many}~n\in\mathbb{N}
    \,\}.
  \]
  We calculated the Hausdorff dimension of $R_\tau$, and also
  prove that $R_\tau$ has a large intersection property.
\end{abstract}

\maketitle

\section{Introduction}

Let $(X, \mathscr{B}, \mu,
T)$ be a measure preserving system equipped with a compatible
metric $\rho$. Let $B(x,r)$ be a ball in $X$ with center
$x$ and radius $r>0$.  If
$(X,\rho)$ is separable, the Poincar\'e Recurrence Theorem yields
that the set
\[
  \{\, x\in X : T^n(x)\in B(x,r) ~{\rm for ~infinitely ~many
    ~}n\in\mathbb{N} \,\}
\]
has full $\mu$-measure. It is qualitative in nature, and
considering the speed of recurrence leads to the study of the
\emph{recurrence set}
\[
  R_\tau = \bigl\{\, x\in X : T^n(x)\in B(x,\psi(n)) ~{\rm for
    ~infinitely ~many ~}n\in \mathbb{N} \,\bigr\},
\]
where $\psi \colon \mathbb{N} \to (0, \infty)$ is a positive
function.  Establishing measure and dimension results for
$R_\tau$ is a fundamental problem. An important result on the
measure of $R_\tau$ is due to Boshernitzan \cite{Bo} who proved
that if $\mu$ is a probability measure, and the
$\alpha$-dimensional Hausdorff measure $\mathcal{H}^\alpha$ of
$X$ is $\sigma$-finite, then for $\mu$-almost all $x\in X$
\[
  \liminf_{n\to\infty}n^{1/\alpha}\rho(T^n(x),x)<\infty.
\]
Moreover, if $\mathcal{H}^\alpha(X)=0$, he also proved that
\[
  \liminf_{n\to\infty}n^{1/\alpha}\rho(T^n(x),x)=0.
\]
This result was further developed by Barreira and Saussol
\cite{BS}. They showed that the exponent $\alpha$ could be
replaced by the lower local dimension of $\mu$ at $x$. If the
measure $\mu$ is uniform, it is shown that $\mu(R_\tau)$ is
related to the convergence or divergence of series
$\sum_{n=1}^\infty \psi(n)^t$ for some $t>0$. We refer to
\cite{bafa,bako,cww,heliao,hlsw,kkp,kz} for some recent
results. Kirsebom, Kunde and Persson \cite{kkp} considered the
case where $\mu$ is nonuniform.

If the measure of $R_\tau$ is 0, then those measure statements
provide little information regarding the `size', hence it is
interesting to determine the dimension of the set to describe the
size of $R_\tau$.  It was first studied by Tan and Wang
\cite{tanwang} for $\beta$-transformations. Later, results have
been obtained for certain expanding and hyperbolic systems on
tori, which we shall now describe.

Let $A$ be a real $d\times d$ matrix.  Then $A$ determines a
self-map $T$ of the torus
$\mathbb{T}^d := \mathbb{R}^d/\mathbb{Z}^d$, defined by
\[
  T(x)= Ax \pmod 1.
\]
Our recurrence set is then
\[
  R_\tau = \bigl\{\, x\in\T^2 : T^n(x)\in B(x,e^{-n\tau})~{\rm
    for ~infinitely ~many ~}n\in \mathbb{N} \,\bigr\}.
\]
He and Liao \cite{heliao} gave a dimension formula for $R_\tau$
when $A$ is a diagonal expanding matrix. Later Hu and Li
\cite{huli} generalised this result to expanding integer matrices
under some condition.

Many other expanding dynamical systems have also been addressed
(see \cite{sw,wy} and references within). Our previous paper
\cite{hupe} investigated the linear toral automorphisms of
$\mathbb{T}^2$ in the case when $\det A = \pm 1$ and one
eigenvalue is outside the unit circle. Under these assumptions,
we gave a formula for the Hausdorff dimension of $R_\tau$ in
terms of $\tau$ and the eigenvalues of $A$. The main purpose of
this paper is to extend our previous result on the Hausdorff
dimension to $R_\tau$ \cite{hupe} to a wider set of matrices $A$.
More precisely, we extend our dimension formula for $R_\tau$ to
any invertible hyperbolic $2\times 2$ integer matrix, that is,
integer matrices with one eigenvalue outside the unit cicle and
one non-zero eigenvalue inside the unit circle.

The recurrence set $R_\tau$ is a limsup set. Falconer
\cite{falconer1} introduced the class of sets with large
intersection property describing the size of limsup sets in sense
of Hausdorff dimension, which is denoted by
$\mathscr{G}^s(\mathbb{R}^d)$. Since in this paper, we study
subsets of the torus $\mathbb{T}^d$, we consider instead the
class $\mathscr{G}^s(\mathbb{T}^d)$ whose definition is
straightforward by restricting sets in
$\mathscr{G}^s(\mathbb{R}^d)$ to $\mathbb{T}^d$. The method used
in this paper also proves that the recurrence set $R_\tau$ has a
large intersection property.



\begin{theorem}
  \label{main}
  Let $A$ be a $2 \times 2$ integer matrix with eigenvalues
  $|\lambda_2|>1>|\lambda_1|$. Let $T(x)=Ax \pmod 1$ and
  $\tau>0$. Then
  \[
    \dimh R_\tau = s_0,
  \]
  where
  \[
    s_0= \min \Bigl\{\frac{2\log|\lambda_2|}{\tau +
      \log|\lambda_2|}, \frac{\log|\lambda_2|}{\tau} \Bigr\}.
  \]
  Moreover, for $\tau>0$, we have
  $R_\tau\in\mathcal{G}^{s_0} (\T^2).$
\end{theorem}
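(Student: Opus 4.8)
The plan is to exploit the description of $R_\tau$ as the set of points lying in $E_n:=\{x\in\T^2:\|(A^n-I)x\|<e^{-n\tau}\}$ for infinitely many $n$, where $\|\cdot\|$ is the distance to $\mathbb Z^2$. Since $0<|\lambda_1|<1<|\lambda_2|$ and the eigenvalues are algebraic integers, they must be irrational quadratic, so $A$ is diagonalisable over $\mathbb R$ with irrational, hence badly approximable, eigendirections. I would first note that $\det(A^n-I)=(\lambda_1^n-1)(\lambda_2^n-1)\neq0$ with $|\det(A^n-I)|\asymp|\lambda_2|^n$, so $B_n:=A^n-I$ induces a toral endomorphism whose fibres have cardinality $\asymp|\lambda_2|^n$. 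Writing $E_n=B_n^{-1}(B(0,e^{-n\tau}))$ and diagonalising, each component of $E_n$ is a parallelogram centred at a point of $\mathrm{Fix}(T^n)=\{x:B_nx\equiv0\}$ whose sides along the unstable and stable eigendirections are comparable to $a_n:=e^{-n(\tau+\log|\lambda_2|)}$ and $b_n:=e^{-n\tau}$; there are $\asymp|\lambda_2|^n$ of them.

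For the upper bound I would cover $R_\tau$ by $\bigcup_{n\ge N}E_n$ and cover each parallelogram either by one ball of radius $\asymp b_n$ or by $\asymp b_n/a_n\asymp|\lambda_2|^n$ balls of radius $\asymp a_n$, whichever is cheaper. The resulting $s$-sums $\sum_n|\lambda_2|^n e^{-ns\tau}$ and $\sum_n|\lambda_2|^{2n}e^{-ns(\tau+\log|\lambda_2|)}$ converge exactly when $s>\log|\lambda_2|/\tau$ and $s>2\log|\lambda_2|/(\tau+\log|\lambda_2|)$ respectively; choosing the better cover in each regime yields $\mathcal H^s(R_\tau)=0$ for every $s>s_0$, so $\dimh R_\tau\le s_0$.

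The heart of the proof is the matching lower bound together with the large intersection property, and the key input is a ubiquity statement for periodic points. The lattice $\Lambda_n:=(A^n-I)^{-1}\mathbb Z^2\supseteq\mathbb Z^2$ has covolume $\asymp|\lambda_2|^{-n}$, and since an integer vector of length $\asymp q$ has stable component $\gtrsim q^{-1}$ (this is precisely the badly approximable slope), the shortest vector of $\Lambda_n$ has length $\asymp\max(|\lambda_2|^{-n}q,q^{-1})$, minimised at $q\asymp|\lambda_2|^{n/2}$; balancing shows both successive minima of $\Lambda_n$ are $\asymp|\lambda_2|^{-n/2}$. Hence $\mathrm{Fix}(T^n)$ has covering radius $\asymp|\lambda_2|^{-n/2}$, and the balls of radius $\rho_n:=C|\lambda_2|^{-n/2}$ centred at the periodic points cover $\T^2$ for a suitable constant $C$, which is the full-measure local ubiquity hypothesis relative to the scale $\rho_n$.

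Finally I would feed this ubiquitous system into a mass transference principle for rectangles, shrinking the ubiquitous balls of radius $\rho_n$ to the parallelograms of $E_n$, whose sides satisfy $a_n=\rho_n^{\alpha_1}$ and $b_n=\rho_n^{\alpha_2}$ with $\alpha_1=2+2\tau/\log|\lambda_2|$ and $\alpha_2=2\tau/\log|\lambda_2|$. In a form that also yields membership in Falconer's large intersection class, the principle gives $R_\tau\in\mathcal G^{s^\ast}(\T^2)$, and one checks that the critical exponent $s^\ast$ equals $s_0$; since membership in $\mathcal G^{s_0}$ forces $\dimh R_\tau\ge s_0$, this closes the dimension gap and proves the large intersection statement simultaneously. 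The main obstacle I anticipate is this last step: establishing the ubiquity constant at the sharp scale $|\lambda_2|^{-n/2}$ uniformly in $n$, and correctly applying the anisotropic transference when $\tau<\tfrac12\log|\lambda_2|$, where $\alpha_2<1$ so that the stable side $b_n$ exceeds $\rho_n$ and the stable direction must be treated as saturated rather than contracted.
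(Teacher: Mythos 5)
Your upper bound is exactly the paper's covering argument, and your lattice analysis of $\Lambda_n=(A^n-I)^{-1}\mathbb{Z}^2$ (both successive minima $\asymp|\lambda_2|^{-n/2}$, hence separation and covering radius of the periodic points at scale $\rho_n\asymp|\lambda_2|^{-n/2}$) is correct and would be a clean substitute for the paper's two Diophantine ingredients, Lemma~\ref{number} (proved via uniform distribution of $(\gamma z)_{z}$ modulo $1$) and Lemma~\ref{claim} (proved via Liouville's theorem); your exponent bookkeeping $\alpha_1=2+2\tau/\log|\lambda_2|$, $\alpha_2=2\tau/\log|\lambda_2|$ and the check that the transference exponent equals $s_0$ when $\alpha_2\ge 1$ are also right. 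The genuine gap is the final step, which is where the entire difficulty of the theorem lives: you invoke ``a mass transference principle for rectangles, in a form that also yields membership in Falconer's large intersection class''. No such off-the-shelf theorem exists in the generality you need. The known anisotropic transference results (Wang--Wu type, from ubiquitous balls or rectangles to rectangles) give Hausdorff measure/dimension conclusions, not membership in $\mathscr{G}^s$, while the known large-intersection frameworks (Falconer, Durand, Persson--Reeve) are formulated for balls. Supplying this missing tool is precisely what the paper does by hand: it takes $\mu_n$ to be normalized Lebesgue measure on $E_n$ and verifies the two hypotheses of Lemma~\ref{lip}, namely the asymptotic comparison $\mu_n(B)\asymp\mu(B)$ for fixed balls (via Lemma~\ref{number}) and the uniform Frostman bound $\mu_n(B(x,r))\lesssim r^{s_0-\epsilon}$ (via Lemma~\ref{claim} and the case analysis of Section~4). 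Your proposal replaces that work by a citation to a theorem that would itself have to be proved.

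The gap is most acute in the regime $\tau<\frac{1}{2}\log|\lambda_2|$, which you flag but do not resolve; there the rectangle-transference route does not merely need care, it fails as stated. Since $\alpha_2<1$, the long side $b_n=e^{-n\tau}$ exceeds the ubiquity scale $\rho_n$, so the target rectangles are not contained in the ubiquitous balls and the hypotheses of every standard transference statement are violated. The obvious repair, re-basing ubiquity at scale $b_n$ so that the exponent pair becomes $\bigl((\tau+\log|\lambda_2|)/\tau,\,1\bigr)$, yields only the lower bound $(2\tau+\log|\lambda_2|)/(\tau+\log|\lambda_2|)$, which is \emph{strictly smaller} than $s_0=2\log|\lambda_2|/(\tau+\log|\lambda_2|)$ precisely because $2\tau<\log|\lambda_2|$. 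What actually happens in this regime (the paper's Case~2) is that the parallelograms chain together in the stable direction into essentially full strips of width $a_n$, whose spacing in the unstable direction is $\asymp e^{-n(\log|\lambda_2|-\tau)}$ rather than $\rho_n$; the Frostman bound with exponent $2\ell_{2,n}/(\tau+\ell_{2,n})$ is then extracted from this effectively one-dimensional transversal structure. That ``saturation'' analysis is a missing idea in your outline, not a routine application of an existing principle, so as written both the lower bound and the claim $R_\tau\in\mathscr{G}^{s_0}(\T^2)$ rest on an unproved black box.
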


We remark that the dimension formula above is same as the one
given in \cite{hupe}.  When $d>2$, the dimension formula for
$\dimh R_\tau$ may be more complicated as the following example
shows. This example draws inspiration from
\cite[Example~7.2]{HPWZ}.




\begin{example}\label{ex:firstexample}
  Let $A=\begin{bmatrix} m&0&0\\0&a&b\\0&c&d\end{bmatrix}$ be an
  integral matrix and $m>1$. Assume that
  $\begin{bmatrix} a&b\\c&d\end{bmatrix}$ satisfies $ad-bc=1$
  with an eigenvalue $\lambda$ strictly larger than 1. We also
  suppose that
  $m>\lambda^{1/2}$. 
  \begin{itemize}
  \item If $m>\lambda$, then
    \begin{multline*}
      \dimh R_\tau=\min\Bigl\{\frac{\tau+3\log \lambda}{\tau+\log
        \lambda},\frac{3\log m}{\tau+\log m},\frac{2\log
        \lambda+\log m}{\tau+\log
        \lambda},\\
      \frac{\log\lambda+\log m}{\tau},\frac{\tau+\log
        \lambda}{\tau}\Bigr\}.
    \end{multline*}

  \item If $m\le \lambda$, then
    \[
      \dimh R_\tau=\min\Bigl\{\frac{2\log m+\log
        \lambda}{\tau+\log m},\frac{3\log \lambda}{\tau+\log
        \lambda},\frac{\log\lambda+\log m}{\tau},\frac{\tau+\log
        \lambda}{\tau}\Bigr\}.
    \]
  \end{itemize}

  By \cite[Lemma~5.1]{HPWZ}, $\lambda$ is irrational, and so
  $m\ne\lambda^{1/2}$. The case when $m<\lambda^{1/2}$ is as
  complex as when $m>\lambda^{1/2}$. One can use a similar method
  as for Example~\ref{ex:firstexample} to study the case that $m<\lambda^{1/2}$, but we will not do so here. The proof of Example~\ref{ex:firstexample} will be given in Section~\ref{sec:example}.
\end{example}

\begin{remark}
  Let $A$ be a $d \times d$ hyperbolic integer matrix. Suppose
  that the dimension formula for $R_\tau$ depends only on the
  eigenvalues of $A$. Let $\ell_i$ denote the logarithms of the
  eigenvalues of $A$, ordered such that $\ell_i \leq
  \ell_{i+1}$. When $d=2,$ Theorem~\ref{main} shows that
  \[
    \min_{1\le i\le d}
    \biggl\{\frac{i\ell_i+\sum_{j=i+1}^d\ell_j}{\tau+\ell_i}
    \biggr\}
  \]
  is the Hausdorff dimension of $R_\tau$. It would therefore not
  be unnatural to expect that the above expression also gives the
  Hausdorff dimension of $R_\tau$ for $d > 2$.  However, when
  $d>2$, the above examples show that this is only an upper bound
  on the Hausdorff dimension, not the dimension formula.
\end{remark}






\section{Prelimimaries}
In this section we give some notation and technical results which
we will use in the proofs of Theorem~\ref{main}.

\begin{notation}
  In this paper, we will use the following notation.  Let $f_n$
  and $g_n$ be two sequences of real numbers. Write
  $f_n\lesssim g_n$ if and only if there exist constants $N\ge 1$
  and $c>1$ such that $f_n\le cg_n$ for all $n\ge N$, and
  $f_n\gtrsim g_n$ means $g_n\lesssim f_n$. $O(f_n)$ denotes some
  quality $\lesssim f_n$.  Write $f_n\asymp g_n$ if
  $f_n\lesssim g_n$ and $g_n\lesssim f_n$.
\end{notation}
 
For $n\ge1$, put
\[
  R_n = \bigl\{\, x\in\T^2 : T^nx\in B(x,e^{-n\tau}) \, \bigr\},
\]
then $R_\tau=\limsup\limits_{n\to\infty}R_n$.

\subsection{Geometric structure of $\boldsymbol{R_n}$}

For $n\ge1$, put
\[
  \mathcal{P}_n := \{\, x\in\T^2 : (A^n-I)x\pmod1=0 \,\} = (A^n -
  I)^{-1} \mathbb{Z}^2 \pmod 1.
\]
Then we have by \cite[Lemma~2.3]{EW} that
\[
  \#\mathcal{P}_n=\prod_{j=1}^2|\lambda_j^n-1|=:H_n.
\]
If $e^{-\tau n} < \frac{1}{2}$, then the set
\begin{equation*}
  \begin{split}
    R_n&=\{\, x\in\T^d : T^nx\in B(x,e^{-n\tau}) \,\}\\
    &=\bigcup_{x\in
      \mathcal{P}_n}(A^n-I)^{-1}B(0,e^{-n\tau})+\{x\}
  \end{split}
\end{equation*}
consists of $H_n$ disjoint ellipses, denoted by $\{R_{n,i}\}_i$,
whose centers are the points in $\mathcal{P}_n$.

In the following, we will investigate the geometric structure of
such ellipses.  Now we give some notation.

Assume that
$A= \bigl[ \begin{smallmatrix} a & b \\ c & d \end{smallmatrix}
\bigr]$ with eigenvalues $|\lambda_2|>1>|\lambda_1|$.
Denote $\gamma=\frac{\lambda_2-a}{b}$ and
$\beta=\frac{\lambda_1-a}{b}$, then
\[
  \begin{bmatrix} 1\\ \gamma
  \end{bmatrix} \quad{\rm
    and} \quad  \begin{bmatrix} 1\\
    \beta
  \end{bmatrix}
\]
are eigenvectors with eigenvalues $\lambda_2$ and $\lambda_1$
respectively. 

For $A$ we write
\begin{align*}
  \lambda_{n,1}
  = \frac{e^{-\tau n}}{|1 - \lambda_1^{n}|}
  \quad {\rm and}\quad 
  \lambda_{n,2}  
  =\frac{e^{-\tau n}}{|\lambda_2^n - 1|},
\end{align*} 
and let $e_{n,2} > e_{n,1}$ be the length of semi-axes of the
ellipse $R_{n,i}$.

The following lemma shows that each ellipse $R_{n,i}$ contains a
parallelogram and is contained in a parallelogram. Both
parallelograms are comparable in size as
Lemma~\ref{lem:parallelograms} and Figure~\ref{fig:parallelogram}
show.

\begin{lemma}
  \label{lem:parallelograms}
  For $i\ge1$, the ellipse $R_{n,i}$ contains an inscribed
  parallelogram $E_{n,i}$ with vertices
  \begin{align*}
    &x_{n,i} +\frac{1}{2}\bigl(\frac{\lambda_{n,2}}{\sqrt{1 +
      \gamma^2}}
      \begin{bmatrix}
        1 \\ \gamma
      \end{bmatrix}
    \pm \frac{\lambda_{n,1}}{\sqrt{1 + \beta^2}}
    \begin{bmatrix}
      1 \\ \beta
    \end{bmatrix} \bigr), \\
    &x_{n,i}+\frac{1}{2}\bigl(-\frac{\lambda_{n,2}}{\sqrt{1 +
      \gamma^2}}
      \begin{bmatrix}
        1 \\ \gamma
      \end{bmatrix}
    \pm \frac{\lambda_{n,1}}{\sqrt{1 + \beta^2}}
    \begin{bmatrix}
      1 \\ \beta
    \end{bmatrix}\bigr),
  \end{align*}
  whose centre $x_{n,i}$ is an $n$-periodic
  point. 
  The side lengths of $E_{n,i}$ are $ \lambda_{n,1}$
  and $ \lambda_{n,2}$, and the ellipse $R_{n,i}$ is contained in a
  bigger parallelogram $\tilde{E}_{n,i}$ with vertices
  \begin{align*}
    & x_{n,i} +c_1\bigl(\frac{\lambda_{n,2}}{\sqrt{1 + \gamma^2}} \begin{bmatrix} 1 \\ \gamma
    \end{bmatrix} \pm \frac{\lambda_{n,1}}{\sqrt{1 + \beta^2}} \begin{bmatrix} 1 \\
      \beta \end{bmatrix} \bigr),
    \\
    & x_{n,i}+c_1\bigl(-\frac{\lambda_{n,2}}{\sqrt{1 +
      \gamma^2}} \begin{bmatrix} 1 \\ \gamma \end{bmatrix} \pm
    \frac{\lambda_{n,1}}{\sqrt{1 + \beta^2}} \begin{bmatrix}
      1 \\
      \beta \end{bmatrix}\bigr) ,
  \end{align*}
  where $c_1>1$ only depends on $A$.
\end{lemma}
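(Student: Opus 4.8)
The plan is to exploit that the real eigenvectors $v_2=\bigl[\begin{smallmatrix}1\\\gamma\end{smallmatrix}\bigr]$ and $v_1=\bigl[\begin{smallmatrix}1\\\beta\end{smallmatrix}\bigr]$ diagonalise $A^n-I$. Since $R_{n,i}=(A^n-I)^{-1}B(0,e^{-n\tau})+x_{n,i}$, a point $y+x_{n,i}$ lies in the closed ellipse $R_{n,i}$ if and only if $\lvert(A^n-I)y\rvert\le e^{-n\tau}$; and the centre satisfies $(A^n-I)x_{n,i}\equiv 0\pmod 1$, i.e. $T^nx_{n,i}=x_{n,i}$, so it is indeed $n$-periodic. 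Writing $u_2=(1+\gamma^2)^{-1/2}v_2$ and $u_1=(1+\beta^2)^{-1/2}v_1$ for the unit eigenvectors, the map $A^n-I$ scales $u_j$ by $\lambda_j^n-1$, and the definitions of $\lambda_{n,1},\lambda_{n,2}$ are tailored precisely so that $\lambda_{n,2}(\lambda_2^n-1)=\varepsilon_2\,e^{-n\tau}$ and $\lambda_{n,1}(\lambda_1^n-1)=\varepsilon_1\,e^{-n\tau}$ with $\varepsilon_j=\operatorname{sign}(\lambda_j^n-1)\in\{\pm1\}$ (these are well defined since $|\lambda_1|<1<|\lambda_2|$ forces $\lambda_1,\lambda_2\in\mathbb{R}$ and $\lambda_j^n\ne1$). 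This single cancellation drives both inclusions.

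For the inscribed parallelogram I would apply $A^n-I$ to each vertex relative to the centre. The four vertices are $\tfrac12(\pm\lambda_{n,2}u_2\pm\lambda_{n,1}u_1)+x_{n,i}$, so the identity above gives $(A^n-I)\bigl(\tfrac12(\pm\lambda_{n,2}u_2\pm\lambda_{n,1}u_1)\bigr)=\tfrac{e^{-n\tau}}{2}(\pm\varepsilon_2u_2\pm\varepsilon_1u_1)$, whose Euclidean norm is at most $\tfrac{e^{-n\tau}}{2}(\lvert u_2\rvert+\lvert u_1\rvert)=e^{-n\tau}$ by the triangle inequality, since $u_1,u_2$ are unit vectors. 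Hence every vertex lies in the closed ellipse $R_{n,i}$, and as $R_{n,i}$ is convex it contains the convex hull of these vertices, namely $E_{n,i}$. The stated side lengths $\lambda_{n,1}$ and $\lambda_{n,2}$ are then immediate, because consecutive vertices differ by $\lambda_{n,2}u_2$ or $\lambda_{n,1}u_1$.

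For the containing parallelogram I would read off coordinates in the (non-orthogonal) eigenbasis: write $y=s\,u_2+t\,u_1$, so that $y+x_{n,i}\in R_{n,i}$ means $\lvert s(\lambda_2^n-1)u_2+t(\lambda_1^n-1)u_1\rvert\le e^{-n\tau}$, while $\tilde E_{n,i}$ is exactly the set of such $y+x_{n,i}$ with $\lvert s\rvert\le c_1\lambda_{n,2}$ and $\lvert t\rvert\le c_1\lambda_{n,1}$. Setting $P=[\,u_2\ u_1\,]$, one has $\bigl(s(\lambda_2^n-1),\,t(\lambda_1^n-1)\bigr)^{\mathsf T}=P^{-1}(A^n-I)y$, hence $\max\{\lvert s(\lambda_2^n-1)\rvert,\lvert t(\lambda_1^n-1)\rvert\}\le\lVert P^{-1}\rVert\,e^{-n\tau}$; dividing by $\lvert\lambda_j^n-1\rvert$ and invoking the definitions of $\lambda_{n,j}$ yields $\lvert s\rvert\le\lVert P^{-1}\rVert\lambda_{n,2}$ and $\lvert t\rvert\le\lVert P^{-1}\rVert\lambda_{n,1}$, so $c_1=\max\{\lVert P^{-1}\rVert,2\}$ works and depends only on $A$.

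The only genuine obstacle is the non-orthogonality of $u_1,u_2$: the true principal axes $e_{n,1},e_{n,2}$ of the ellipse are not aligned with the eigendirections, so one cannot simply read the semi-axes off the eigenvalues. The device that bypasses this is to compare the ellipse not with its own axes but with parallelograms built from the eigenvectors, the fixed angle between $u_1$ and $u_2$ being absorbed into the single constant $\lVert P^{-1}\rVert$, which is uniform in $n$ because $P$ is independent of $n$. Finally I would note $c_1>1$ and that $\tilde E_{n,i}$ is the dilate of $E_{n,i}$ by the factor $2c_1$ about $x_{n,i}$, so the two parallelograms are comparable in size with ratio depending only on $A$.
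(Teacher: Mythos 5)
Your proposal is correct and follows essentially the same route as the paper: diagonalise $A^n-I$ in the eigenbasis $u_1,u_2$, check the four vertices map into (respectively out of) the ball $B(0,e^{-n\tau})$ and use convexity for the inscribed parallelogram, and absorb the fixed angle between the eigendirections into a constant for the containing one. The only cosmetic differences are that the paper pushes points of the ball forward through $(A^n-I)^{-1}$ rather than pulling the vertices back through $A^n-I$, and it writes your $\lVert P^{-1}\rVert$-type constant explicitly as $c_1=\frac{\sqrt{(1+\beta^2)(1+\gamma^2)}}{|\beta-\gamma|}$.
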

  
\begin{figure}
  \begin{center}
    \includegraphics[scale=0.3]{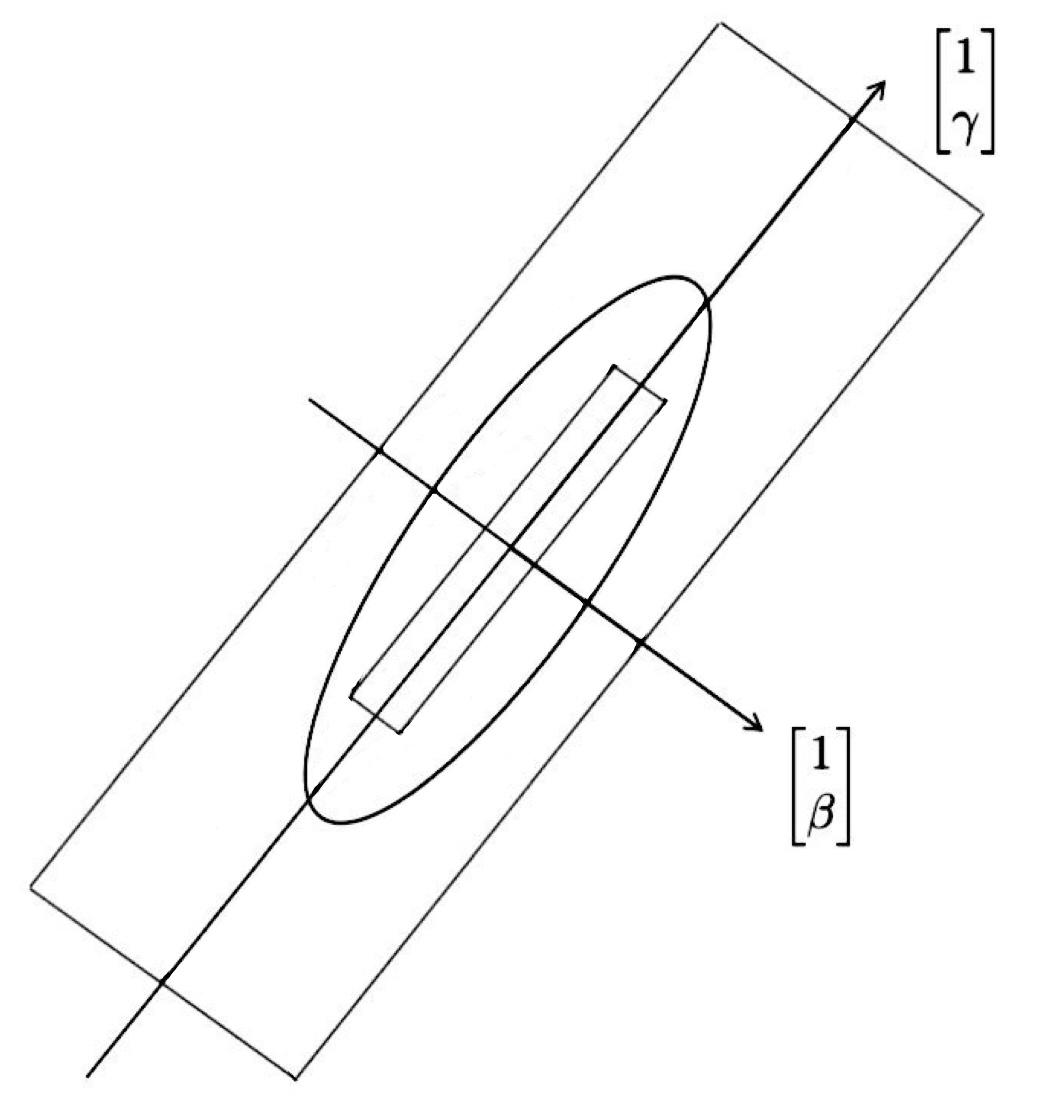}
  \end{center}
  \caption{The ellipse $R_{n,i}$ and parallelograms $E_{n,i}$ and
    $\tilde{E}_{n,i}$ which are marked in orange for some $n$.}
  \label{fig:parallelogram}
\end{figure}

\begin{proof}
  Since each $R_{n,i}$ is a translation of the ellipse
  $(A^n-I)^{-1} B(0,e^{-n\tau})$, we can inscribe a parallelogram
  in $R_{n,i}$ as follows. Consider the points
  \begin{align*}
    &\frac{ e^{-n\tau}}{2}
      \Bigl(\frac{1}{\sqrt{1+\gamma^2}}\begin{bmatrix} 1 \\
        \gamma \end{bmatrix}\pm
    \frac{1}{\sqrt{1+\beta^2}} \begin{bmatrix} 1 \\
      \beta \end{bmatrix}\Bigr), \\
    &\frac{ e^{-n\tau}}{2}
      \Bigl(-\frac{1}{\sqrt{1+\gamma^2}}\begin{bmatrix} 1 \\
        \gamma \end{bmatrix}\pm
    \frac{1}{\sqrt{1+\beta^2}} \begin{bmatrix} 1 \\
      \beta \end{bmatrix}\Bigr)
  \end{align*}
  which are points in $B(0,e^{-n\tau})$.  Since the vectors
  $\begin{bmatrix} 1 \\ \gamma \end{bmatrix}$ and
  $\begin{bmatrix} 1 \\ \beta \end{bmatrix}$ are eigenvectors of
  $A$, it follows that the ellipse $(A^n-I)^{-1} B(0,e^{-n\tau})$
  has an inscribed parallelogram with vertices given by
  \begin{align*}
    & \frac{1}{2}\Bigl(\frac{\lambda_{n,2}}{\sqrt{1 +
      \gamma^2}} \begin{bmatrix} 1 \\ \gamma 
  \end{bmatrix} \pm \frac{\lambda_{n,1}}{\sqrt{1 +
    \beta^2}} \begin{bmatrix} 1 \\
  \beta \end{bmatrix} \Bigr), \\
    & \frac{1}{2}\Bigl(-\frac{\lambda_{n,2}}{\sqrt{1 +
      \gamma^2}} \begin{bmatrix} 1 \\ \gamma \end{bmatrix} \pm
    \frac{\lambda_{n,1}}{\sqrt{1 +
    \beta^2}} \begin{bmatrix} 1 \\
  \beta \end{bmatrix}\Bigr) .
  \end{align*}
  
  In the same
  way, 
  the ellipse $(A^n-I)^{-1} B(0,e^{-n\tau})$ can be contained in
  a larger parallelogram with vertices given by
  \begin{align*}
    & c_1\Bigl(\frac{\lambda_{n,2}}{\sqrt{1 +
      \gamma^2}} \begin{bmatrix} 1 \\ \gamma 
  \end{bmatrix} \pm \frac{\lambda_{n,1}}{\sqrt{1 +
    \beta^2}} \begin{bmatrix} 1 \\
  \beta \end{bmatrix} \Bigr), \\
    & c_1\Bigl(-\frac{\lambda_{n,2}}{\sqrt{1 +
      \gamma^2}} \begin{bmatrix} 1 \\ \gamma \end{bmatrix} \pm
    \frac{\lambda_{n,1}}{\sqrt{1 +
    \beta^2}} \begin{bmatrix} 1 \\
  \beta \end{bmatrix}\Bigr)
  \end{align*}
  with
  $c_1=\frac{\sqrt{1+\beta^2}\sqrt{1+\gamma^2}}{|\beta-\gamma|}$.

  Therefore
  the quotients $e_{n,i}/\lambda_{n,i}$ are bounded from above
  and from below by constants independent of $n$ and $i$.
\end{proof}
  

In the following two sections, we will give some definitions and
notation which are used in Section~\ref{sec:lowerbound}.

\subsection{Large intersection property}

The large intersection property was introduced by Falconer
\cite{falconer1}. The $s$-dimensional class of sets with large
intersections is denoted by $\mathscr{G}^s(\mathbb{R}^d)$, which
consists of sets of Hausdorff dimension at least $s$.

For $s>0$, the class $\mathscr{G}^s(\mathbb{R}^d)$ is defined as
the class of $G_\delta$-sets $F$ of $\mathbb{R}^d$ satisfying
\[
  \dimh \Bigl(\bigcap_{i=1}^\infty f_i(F)\Bigr)\ge s.
\]
for any countable sequence $(f_i)_{i=1}^\infty$ of similarities,
and closed under countable intersections. See also
\cite{falconer1,falconer2} for other ways to define
$\mathscr{G}^s(\mathbb{R}^d)$. It is also straightforward to
define $\mathscr{G}^s(\mathbb{T}^d)$ by restricting sets in
$\mathscr{G}^s(\mathbb{R}^d)$ to $\mathbb{T}^d$.

Persson and Reeve \cite{PR} developed a method for analysing the
Hausdorff dimension of limsup sets.  They related the class of
sets with large intersection property to some potentials and
energies. Based on this, there are several developments. We use a
simplified variation of Theorem~1.1 in \cite{PR} from
\cite{HPWZ}, which is important in estimating the lower bound on
$\dimh R_\tau$.

\begin{lemma}[Lemma~3.2 of \cite{HPWZ}]\label{lip}
  Let $E_n$ be open sets in $\mathbb{T}^d$ and let $\mu_n$ be
  probability measures with
  $\mu_n(\mathbb{T}^d \setminus E_n) = 0$.  Suppose that there
  are constants $C$ and $s$ such that
  \begin{equation}\label{munballn}
    C^{-1} \le \liminf_{n\to\infty} \frac{\mu_n(B)}{\mu(B)}\le
    \limsup_{n\to\infty} \frac{\mu_n(B)}{\mu(B)}  \le C
  \end{equation}
  for any ball $B$ and $\mu_n(B) \le Cr^s$ for all $n$ and any
  ball $B$ of radius $r$.  Then
  $\limsup\limits_{n\to\infty}
  E_n\in\mathscr{G}^s(\mathbb{T}^d).$
\end{lemma}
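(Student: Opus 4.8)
The plan is to deduce the lemma from the potential-theoretic machinery of Persson and Reeve \cite{PR}, whose Theorem~1.1 gives a sufficient condition for a limsup set to belong to a large intersection class, phrased in terms of weak convergence and $t$-energies of an auxiliary sequence of measures. The hypotheses \eqref{munballn} and the Frostman-type bound $\mu_n(B)\le Cr^s$ are designed to supply exactly the two inputs that criterion requires, so the bulk of the work is to verify them and then invoke \cite{PR}.

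First I would use \eqref{munballn} to pin down the limiting behaviour of $(\mu_n)$. Since each $\mu_n$ is a probability measure on the compact space $\mathbb{T}^d$, the sequence is tight and, passing to a subsequence, converges weakly to some probability measure $\nu$. The two-sided comparison in \eqref{munballn} forces $C^{-1}\mu(B)\le\nu(B)\le C\mu(B)$ for every ball $B$ (using the portmanteau theorem on open and closed balls to handle boundaries), so every subsequential weak limit is comparable to the fixed reference measure $\mu$ on all balls; in particular $\mu$ is, up to bounded multiplicative constants, the weak limit of $(\mu_n)$. This comparability is what later guarantees that no open set is undercharged in the limit.

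Second, the uniform bound $\mu_n(B)\le Cr^s$ controls the $t$-energy of $\mu_n$ for every $t<s$. Decomposing the kernel dyadically and summing the mass estimates gives
\[
  I_t(\mu_n)=\iint |x-y|^{-t}\,d\mu_n(x)\,d\mu_n(y)\le C_t<\infty,
\]
with $C_t$ independent of $n$. This uniform energy bound, together with the weak convergence from the previous step, is precisely the data fed into the Persson--Reeve criterion.

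Finally, \cite{PR} then yields that $\limsup_{n\to\infty} E_n$ carries, locally on every ball, measures of finite $t$-energy whose mass is bounded below in terms of the $\mu$-mass of the ball, for each $t<s$. By the content characterisation of the large intersection classes due to Falconer \cite{falconer1,falconer2}, this is exactly membership in $\mathscr{G}^t(\mathbb{T}^d)$ for every $t<s$; since $\mathscr{G}^s=\bigcap_{t<s}\mathscr{G}^t$, we conclude $\limsup_{n\to\infty} E_n\in\mathscr{G}^s(\mathbb{T}^d)$. I expect the main obstacle to be the heart of the Persson--Reeve estimate: ensuring that the mass the $\mu_n$ place on $E_n$ genuinely accumulates on the limsup set rather than escaping to its complement. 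The uniform energy bound is what prevents such escape, while the lower comparison in \eqref{munballn} is what keeps the lower mass bound alive in the limit, so that the limsup set remains full on every open set.
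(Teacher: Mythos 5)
\noindent First, a point of context: this paper never proves Lemma~\ref{lip} at all --- it is quoted verbatim from \cite[Lemma~3.2]{HPWZ}, which is itself presented as a ``simplified variation'' of \cite[Theorem~1.1]{PR}. So there is no internal proof to compare your attempt against; it has to be measured against those cited sources. Your overall route --- deduce the lemma from the Persson--Reeve criterion, using the Frostman bound $\mu_n(B)\le Cr^s$ to obtain uniform $t$-energy bounds for every $t<s$, and finally intersecting the classes over $t<s$ --- does match the actual provenance of the statement, and several of your steps are fine: the dyadic-decomposition energy estimate is standard, and $\mathscr{G}^s(\mathbb{T}^d)=\bigcap_{t<s}\mathscr{G}^t(\mathbb{T}^d)$ is immediate from the definition of the classes via similarities.

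The genuine gap is the black-box application of \cite[Theorem~1.1]{PR}. That criterion, as stated, requires the \emph{whole} sequence $(\mu_n)$ to converge weakly to a specific limit, namely Lebesgue measure, whereas hypothesis \eqref{munballn} gives strictly less: your compactness argument produces only subsequential weak limits $\nu$ with $C^{-1}\mu(B)\le\nu(B)\le C\mu(B)$, and ``$\mu$ is, up to bounded multiplicative constants, the weak limit'' is not a hypothesis any theorem accepts. Concretely, take $\mu_n=f\,d\mu$ for every $n$, where $C^{-1}\le f\le C$ and $f$ is not a.e.\ equal to $1$, and let $E_n$ be open sets of full $\mu_n$-measure: then \eqref{munballn} holds, the Frostman bound holds for any $s\le d$, but every subsequential weak limit equals $f\,d\mu\ne\mu$, so the Lebesgue-limit form of \cite[Theorem~1.1]{PR} never applies --- even though the conclusion of Lemma~\ref{lip} is true. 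Closing this gap means reopening the Persson--Reeve proof and checking that it runs when the limit measure merely has density bounded between $C^{-1}$ and $C$ (equivalently, when \eqref{munballn} is used directly in place of weak convergence to Lebesgue); that reworking is precisely what \cite[Lemma~3.2]{HPWZ} supplies, so as written your argument defers the entire substance of the lemma to a theorem that does not quite apply. Two secondary points. Passing to a subsequence is harmless, but only because $\limsup_{k}E_{n_k}\subset\limsup_{n}E_n$, both are $G_\delta$ sets, and membership in $\mathscr{G}^s$ is monotone under $G_\delta$ supersets --- this needs to be said. And your final paragraph (mass lower bounds plus finite energy ``is exactly'' Falconer's content characterisation) hides the hard part: Falconer's characterisation demands content bounds that scale like $r^t$ on every ball of radius $r$, uniformly over all scales and locations, and producing measures on $\limsup_n E_n$ with that scaling --- rather than merely positive mass and finite energy --- is the heart of the Persson--Reeve construction, not a one-line consequence of it.
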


\subsection{Uniform distribution modulo 1}

For a real number $x$, let $[x]$ denote the integral part of $x$,
that is, the greatest integer not larger than $x$; let
$\{x\} = x - [x]$ be the fractional part of $x$. In this paper,
we also use the notation $\{ \ldots \}$ for sets. However the
context makes the meaning clear.

Let $(w_n)_{n\in\mathbb{N}}$ be a given sequence of real
numbers. For an integer $N\ge 1$ and a subset $E$ of $\T$, the
counting function $A(E; N; (w_n))$ is defined as the number of
terms $w_n,~ 1 \le n \le N,$ for which $\{w_n\}\in E$, that is
\[
  A(E; N; (w_n)) = \#\{\, 1 \le n \le N : \{w_n\}\in E \,\}.
\]

\begin{definition}
  The sequence $(w_n)_{n\in\mathbb{N}}$ of real numbers is
  \emph{uniformly distributed modulo 1} if for every pair $a$,
  $b$ of real numbers with $0 \le a < b < 1$, we have
  \[
    \lim_{N\to\infty}\frac{A([a,b); N; (w_n))}{N}= b - a.
  \]
\end{definition}

To estimate the lower bound on $\dimh R_\tau$, according to
Lemma~\ref{lip}, we need to estimate $\mu_n(B)$ for any ball $B$
and for the measures $(\mu_n)_n$ (to be defined later). The
approach in our paper depends on the following theorem, and we
refer to \cite{Bugeaud, KN} for more relevant background and
details.

\begin{theorem}[Theorem~1.3 in \cite{Bugeaud} or Example~2.1 in
  \cite{KN}]
  \label{udmo}
  For any irrational real number $\alpha$, the sequence
  $(n\alpha)_{n\ge1}$ is uniformly distributed modulo one.
\end{theorem}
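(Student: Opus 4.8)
The plan is to prove this classical result of Weyl via \emph{Weyl's criterion}, which converts the distributional statement about intervals into the behaviour of exponential sums. Concretely, I would first establish that a sequence $(w_n)$ is uniformly distributed modulo $1$ if and only if
\[
  \lim_{N\to\infty}\frac{1}{N}\sum_{n=1}^N e^{2\pi i h w_n}=0
  \qquad\text{for every nonzero integer }h,
\]
and then verify this exponential-sum condition for the specific choice $w_n=n\alpha$.

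For the criterion itself, the intermediate object is the averaging functional $f\mapsto \frac1N\sum_{n=1}^N f(\{w_n\})$. I would show the chain of equivalences: uniform distribution is precisely the statement that these averages converge to $\int_0^1 f$ when $f$ is the indicator of an interval $[a,b)$; by sandwiching such an indicator between continuous $1$-periodic functions that agree with it except on arbitrarily small neighbourhoods of $a$ and $b$, this is equivalent to convergence of the averages for all continuous $1$-periodic $f$; and by the Weierstrass (or Fej\'er) approximation theorem, every continuous $1$-periodic function is a uniform limit of trigonometric polynomials, so by linearity convergence for all such $f$ is equivalent to convergence for each exponential $x\mapsto e^{2\pi i h x}$. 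Since $\int_0^1 e^{2\pi i h x}\,dx=0$ for $h\neq 0$, this last condition is exactly the displayed criterion.

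Granting the criterion, the application is a short computation. Fix a nonzero integer $h$. Because $\alpha$ is irrational, $h\alpha$ is not an integer, so $e^{2\pi i h\alpha}\neq 1$ and the sum is geometric:
\[
  \sum_{n=1}^N e^{2\pi i h n\alpha}
  = e^{2\pi i h\alpha}\,\frac{e^{2\pi i hN\alpha}-1}{e^{2\pi i h\alpha}-1},
\]
whence
\[
  \Bigl|\sum_{n=1}^N e^{2\pi i h n\alpha}\Bigr|
  \le \frac{2}{|e^{2\pi i h\alpha}-1|}.
\]
The right-hand side is a constant depending only on $h$ and $\alpha$, so dividing by $N$ and letting $N\to\infty$ gives limit $0$. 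As this holds for every nonzero $h$, Weyl's criterion yields the uniform distribution of $(n\alpha)_{n\ge1}$.

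The main obstacle is the proof of Weyl's criterion, and specifically the implication from the vanishing of all exponential sums back to uniform distribution: this is where the genuine analysis lives, requiring both the trigonometric approximation of continuous functions and the sandwiching of interval indicators, together with control of the error coming from the small-measure neighbourhoods of the endpoints where the continuous approximants disagree with the indicator. By contrast, the irrationality hypothesis enters only at the very end, to guarantee $e^{2\pi i h\alpha}\neq 1$; indeed, for rational $\alpha$ the sequence $(\{n\alpha\})$ is periodic and visibly not equidistributed, which shows that this hypothesis cannot be dropped.
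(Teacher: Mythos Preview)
Your argument via Weyl's criterion is correct and is precisely the classical proof; there is no gap. Note, however, that the paper does not give its own proof of this statement at all: it is quoted as a known result with references to \cite{Bugeaud} and \cite{KN}, and those sources contain exactly the Weyl-criterion argument you outline (indeed, Example~2.1 in \cite{KN} is this computation). So rather than differing from the paper's approach, you have supplied the proof that the paper outsources to the literature.
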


\section{The upper bound on $\dimh R_\tau$}
Recall that
\[
  R_n = \{ x\in\mathbb{T}^2 : (A^n-I)x\pmod1\in B(0,e^{-n\tau})
  \,\} = \bigcup_{i=1}^{H_n} R_{n,i},
\]
and by Lemma~\ref{lem:parallelograms}, the set $R_{n,i}$ is an
ellipse with length of semi-axes comparable to
$e^{-n\tau}|\lambda_2^n-1|^{-1}$ and
$e^{-n\tau}|\lambda_1^n-1|^{-1}$.

For $N\ge 1$,
\[
  R_\tau\subset
  \bigcup_{n=N}^{\infty}R_n=\bigcup_{n=N}^{\infty}\bigcup_{i=1}^{H_n}R_{n,i}.
\]

There are two kinds of natural coverings of $R_n$ which gives
different upper bounds.

\begin{itemize}
\item[(i)] We use balls of radius
  $e^{-n\tau}|\lambda_1^n-1|^{-1}$ to cover $R_n$, the number of
  which is
  \[
    \lesssim H_n.
  \]
  Then
  \begin{equation*}
    \begin{split}
      \mathcal{H}^s(R_\tau)&\le \liminf_{N\to\infty} \sum_{n=N}^\infty \sum_{i=1}^{H_n}|R_{n,i}|^s\\
      &\lesssim \liminf_{N\to\infty} \sum_{n=N}^\infty H_ne^{-ns\tau}|\lambda_1^n-1|^{-s}\\
      &\lesssim \liminf_{N\to\infty} \sum_{n=N}^\infty e^{-n(\tau s-\log|\lambda_2|)}\\
      &=0,
    \end{split}
  \end{equation*}
  where the last equality holds for
  $s>\frac{\log|\lambda_2|}{\tau}$.

\item[(ii)] We use balls of radius
  $e^{-n\tau}|\lambda_2^n-1|^{-1}$ to cover $R_{n,i}$,
  $1\le i\le H_n$, and the number of such balls is
  \[
    \lesssim
    \frac{e^{-n\tau}|\lambda_1^n-1|^{-1}}{e^{-n\tau}|\lambda_2^n-1|^{-1}}=
    \Big|\frac{\lambda_2^n-1}{1-\lambda_1^n} \Big|.
  \]
  Then for $s>\frac{2\log|\lambda_2|}{\tau+\log|\lambda_2|}$,
  \begin{equation*}
    \begin{split}
      \mathcal{H}^s(R_\tau)&\lesssim  \liminf_{N\to\infty}  \sum_{n=N}^\infty \sum_{i=1}^{H_n}  \Big|\frac{\lambda_2^n-1}{1-\lambda_1^n} \Big|e^{-ns\tau}|\lambda_2^n-1|^{-s}\\
      &\lesssim \liminf_{N\to\infty} \sum_{n=N}^\infty e^{-n((\tau+\log|\lambda_2|)s-2\log|\lambda_2|)}\\
      &=0.
    \end{split}
  \end{equation*}
  Therefore we conclude that
  \[
    \dimh R_\tau\le
    \min\Bigl\{\frac{2\log|\lambda_2|}{\tau+\log|\lambda_2|},
    \frac{\log|\lambda_2|}{\tau}\Bigr\}.
  \]
\end{itemize}

\section{The lower bound on $\dimh R_\tau$}
\label{sec:lowerbound}

By Lemma~\ref{lem:parallelograms}, each ellipse $R_{n,i}$
contains the parallelogram $E_{n,i}$ with side lengths
\[
  \frac{e^{-n\tau}|\lambda_1^n-1|^{-1}}{\sqrt{1+\gamma^2}},\quad
  \frac{e^{-n\tau}|\lambda_2^n-1|^{-1}}{\sqrt{1+\beta^2}}.
\]
Put
\[
  E_n=\bigcup_{i=1}^{H_n}E_{n,i}\subset R_n,
\]
then $\limsup\limits_{n\to\infty}E_n\subset R_\tau$. We will give
a lower bound on $\dimh (\limsup\limits_{n\to\infty}E_n)$.

Without loss of generality, we assume that
$\lambda_2>\lambda_1>0$. Because if one of them is negative, we
may consider $A^{2n}$ instead of $A^n$, and
$\limsup\limits_{n\to\infty}R_{2n}\subset R_\tau$.

Let $\mu$ denote the 2-dimensional normalized Lebesgue measure on
$\T^2$. Define
\[
  \mu_n=\frac{1}{\mu(E_n)}\mu|_{E_n},
\]
where
\[
  \mu(E_n)=\frac{|\gamma-\beta|}{\sqrt{(1+\beta^2) (1+\gamma^2)}}
  e^{-2n\tau} \asymp e^{-2n\tau},
\]
and
\[
  \mu(E_{n,i}) = \frac{|\gamma-\beta|}{\sqrt{(1+\beta^2)
      (1+\gamma^2)}} \frac{e^{-2n\tau}}{H_n} \asymp
  \frac{e^{-2n\tau}}{H_n}.
\]

We will use Lemma~\ref{lip} to show the lower bound, and the
proof is divided into two parts.

\subsection{Limit behaviour of the measures $\boldsymbol{\mu_n}$}

For any ball $B\subset \T^2$, we shall show there exists a
constant $C>1$ such that
\[
  C^{-1}\le
  \liminf_{n\to\infty}\frac{\mu_n(B)}{\mu(B)}\le\limsup_{n\to\infty}\frac{\mu_n(B)}{\mu(B)}\le
  C.
\]

\begin{lemma}\label{number}
  Given $\tilde{B}=B(\tilde{x},\tilde{r})\subset \mathbb{T}^2$,
  there exists $N(\tilde{B})\ge 1$ such that for all
  $n\ge N(\tilde{B})$, we have
  \[
    \# \tilde{B}\cap \mathcal{P}_n\asymp \tilde{r}^2H_n.
  \]
\end{lemma}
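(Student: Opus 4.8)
The plan is to replace the finite set $\mathcal{P}_n\subset\mathbb{T}^2$ by the full lattice
\[
  L_n := (A^n-I)^{-1}\mathbb{Z}^2\subset\mathbb{R}^2,
\]
for which $\mathbb{Z}^2\subset L_n$ (as $A$ is an integer matrix) and $\mathcal{P}_n=L_n/\mathbb{Z}^2$, and then to count points of $L_n$ in a Euclidean disc. Assume first $\tilde r<1/2$, so that $\tilde B$ lifts to an honest disc $\tilde B^{*}=B(\tilde x^{*},\tilde r)\subset\mathbb{R}^2$; since two points of one $\mathbb{Z}^2$-coset differ by a vector of length $\ge 1>2\tilde r$, each coset has at most one representative in $\tilde B^{*}$, whence $\#(\tilde B\cap\mathcal{P}_n)=\#(L_n\cap\tilde B^{*})$ (the case $\tilde r\gtrsim 1$ is immediate, the count being $\asymp H_n\asymp\tilde r^2H_n$). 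The covolume is $\operatorname{covol}(L_n)=|\det(A^n-I)|^{-1}=H_n^{-1}$, and $H_n=|\lambda_1^n-1|\,|\lambda_2^n-1|\asymp|\lambda_2|^{n}$, so the anticipated main term $\operatorname{area}(\tilde B^{*})/\operatorname{covol}(L_n)=\pi\tilde r^2H_n$ is already of the claimed size.

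The real work is to control the shape of $L_n$, i.e.\ its successive minima $\lambda_{(1)}(L_n)\le\lambda_{(2)}(L_n)$; what must be excluded is that $L_n$ degenerates into sparse, widely separated lines, for then a small disc could miss it. A nonzero vector of $L_n$ is $(A^n-I)^{-1}k$ with $k=(p,q)\in\mathbb{Z}^2\setminus\{0\}$; writing $k=k_1v_1+k_2v_2$ in the eigenbasis $v_1=(1,\beta)$, $v_2=(1,\gamma)$ (eigenvalues $\lambda_1,\lambda_2$), its squared length is comparable, with constants depending only on the fixed angle between $v_1$ and $v_2$, to
\[
  \Bigl(\tfrac{k_1}{\lambda_1^n-1}\Bigr)^2+\Bigl(\tfrac{k_2}{\lambda_2^n-1}\Bigr)^2\asymp k_1^2+|\lambda_2|^{-2n}k_2^2\ge 2|\lambda_2|^{-n}|k_1k_2|.
\]
Now $k_1k_2=-\frac{1}{b(\beta-\gamma)^2}\bigl(bq^2-(d-a)pq-cp^2\bigr)$, because $\beta+\gamma=\frac{d-a}{b}$ and $\beta\gamma=-\frac{c}{b}$ are rational. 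The integer binary form $bq^2-(d-a)pq-cp^2$ has discriminant $(a+d)^2-4(ad-bc)=(\lambda_2-\lambda_1)^2$, a non-square positive integer since the eigenvalues of a hyperbolic integer matrix are irrational; hence the form is anisotropic over $\mathbb{Q}$ and is $\ge 1$ in absolute value on $\mathbb{Z}^2\setminus\{0\}$. Thus $|k_1k_2|\ge c_0$ for some $c_0=c_0(A)>0$, and the display gives $\lambda_{(1)}(L_n)\gtrsim|\lambda_2|^{-n/2}$. Combined with $\lambda_{(1)}\lambda_{(2)}\asymp\operatorname{covol}(L_n)\asymp|\lambda_2|^{-n}$ (Minkowski's second theorem), this forces $\lambda_{(1)}(L_n)\asymp\lambda_{(2)}(L_n)\asymp|\lambda_2|^{-n/2}$: the lattice is well-rounded, with spacing tending to $0$.

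It then remains to invoke the classical lattice-point count. Tiling $\mathbb{R}^2$ by fundamental parallelograms of $L_n$, each of diameter $\asymp\lambda_{(2)}(L_n)\asymp|\lambda_2|^{-n/2}$, the difference between $\#(L_n\cap\tilde B^{*})$ and $\operatorname{area}(\tilde B^{*})/\operatorname{covol}(L_n)=\pi\tilde r^2H_n$ is at most the number of tiles meeting the bounding circle, namely
\[
  \lesssim\frac{\tilde r\,|\lambda_2|^{-n/2}+|\lambda_2|^{-n}}{\operatorname{covol}(L_n)}\asymp\tilde r\,|\lambda_2|^{n/2}+1.
\]
The relative error is therefore $\lesssim(\tilde r\,|\lambda_2|^{n/2})^{-1}\to 0$, so there is $N(\tilde B)\asymp\log(1/\tilde r)/\log|\lambda_2|$ such that for all $n\ge N(\tilde B)$ the main term dominates and $\#(\tilde B\cap\mathcal{P}_n)\asymp\tilde r^2H_n$, as claimed.

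The single non-formal ingredient is the lower bound $\lambda_{(1)}(L_n)\gtrsim|\lambda_2|^{-n/2}$; the reduction, the covolume computation, and the boundary estimate are all routine. This is precisely where the arithmetic of $A$ enters: the anisotropy of the associated integer quadratic form — equivalently, the fact that the eigen-slope $\gamma$ is a badly approximable quadratic irrational — is what prevents the periodic points from concentrating on thin lines and instead spreads them uniformly at scale $|\lambda_2|^{-n/2}$. I expect this Diophantine step to be the crux of the proof.
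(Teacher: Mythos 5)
Your argument is correct, but it takes a genuinely different route from the paper's. The paper pushes the ball forward: it reduces the lemma to counting $\#\bigl((A^n-I)\tilde B\bigr)\cap\mathbb{Z}^2$, sandwiches $(A^n-I)\tilde B$ between parallelograms with one side $\asymp(\lambda_2^n-1)\tilde r$ in the unstable direction $(1,\gamma)$ and the other side $\asymp\tilde r$, and counts integer points column by column: an abscissa $z_1$ contributes a lattice point exactly when $\{\gamma z_1\}$ lands in an interval of length $\asymp\tilde r$, so the count $\asymp(\lambda_2^n-1)\tilde r^2\asymp\tilde r^2H_n$ follows from equidistribution of $(\gamma z_1)_{z_1\ge 1}$ (Theorem~\ref{udmo}, using only that $\gamma$ is irrational). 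You instead pull the lattice back: you show that $L_n=(A^n-I)^{-1}\mathbb{Z}^2$ is well rounded, with both successive minima $\asymp|\lambda_2|^{-n/2}$, the key lower bound on the first minimum coming from $|bq^2-(d-a)pq-cp^2|\ge 1$ on $\mathbb{Z}^2\setminus\{0\}$ (your discriminant computation $(a+d)^2-4\det A=(\lambda_2-\lambda_1)^2$, a positive non-square since the eigenvalues are irrational, is correct; note it uses $\det A\ne 0$, the paper's standing invertibility assumption), and you then run the standard convex-body lattice-point count with a boundary error. The two arithmetic inputs differ in strength: the paper uses the soft, non-quantitative equidistribution of $(\gamma z_1)$, whereas you use the quantitative anisotropy of the quadratic form --- equivalently, the badly approximable property of the quadratic irrational $\gamma$ --- which is exactly the input the paper reserves for its separation result, Lemma~\ref{claim} (via Liouville's theorem). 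Accordingly, your proof buys more: an effective threshold $N(\tilde B)\asymp\log(1/\tilde r)/\log|\lambda_2|$ depending only on $\tilde r$, a genuine asymptotic $\#(\tilde B\cap\mathcal{P}_n)=\pi\tilde r^2H_n(1+o(1))$ rather than only $\asymp$, and a unified picture behind Lemmas~\ref{number} and~\ref{claim}: $\mathcal{P}_n$ is a well-rounded lattice of spacing $\asymp|\lambda_2|^{-n/2}$, which is precisely the separation the paper re-derives in Case~1 of Section~\ref{sec:lowerbound}. The price is geometry-of-numbers machinery (Minkowski's second theorem and a reduced basis giving a fundamental domain of diameter comparable to the second minimum), which the paper's more elementary equidistribution argument avoids.
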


\begin{proof}
  It suffices to estimate
  $\#((A^n-I)\tilde{B}) \cap \mathbb{Z}^2$.
  Note that there exist constants $M_1,M_2>0$, depending on $A$
  only, such that for $n$ large enough $(A^n-I)\tilde{B}$
  contains a parallelogram
  $\tilde{E}_{n,1}$ 
  with centre $x_n=(A^n-I)x_{\tilde{B}}$ and the vertices
  \[
    x_n +\tilde{r}\Bigl( M_1(\lambda_2^n-1) \begin{bmatrix} 1 \\
      \gamma
    \end{bmatrix} \pm M_2(1-\lambda_1^n) \begin{bmatrix} 0 \\ 1
    \end{bmatrix} \Bigr), \]
  \[
    x_n -\tilde{r}\Bigl(M_1(\lambda_2^n-1)\begin{bmatrix} 1 \\
      \gamma
    \end{bmatrix} \pm M_2(1-\lambda_1^n) \begin{bmatrix} 0 \\
      1 \end{bmatrix} \Bigr).
  \]
  There also exist constants $\tilde{M}_1,\tilde{M}_2>0$
  independent of $n$ and $\tilde{B}$ such that for $n$ large
  enough $(A^n-I)\tilde{B}$ is contained in a parallelogram
  $\tilde{E}_{n,2}$ 
  with centre $x_n=(A^n-I)x_{\tilde{B}}$ and the vertices
  \begin{align*}
    &x_n +\tilde{r}\Bigl( \tilde{M}_1(\lambda_2^n-1) \begin{bmatrix} 1 \\ \gamma
    \end{bmatrix} \pm \tilde{M}_2(1-\lambda_1^n) \begin{bmatrix}
      0 \\ 1
    \end{bmatrix} \Bigr), \\
    &x_n
      -\tilde{r}\Bigl(\tilde{M}_1(\lambda_2^n-1)\begin{bmatrix} 1
                                                  \\ \gamma
                                                \end{bmatrix} \pm
                                                \tilde{M}_2(1-\lambda_1^n) \begin{bmatrix}
                                                  0 \\
                                                  1 \end{bmatrix}
                                                \Bigr).
  \end{align*}
  Such parallelograms are illustrated in
  Figure~\ref{fig:anotherparallelogram}.
  
  \begin{figure}
    \begin{center}
      \includegraphics[scale=0.18]{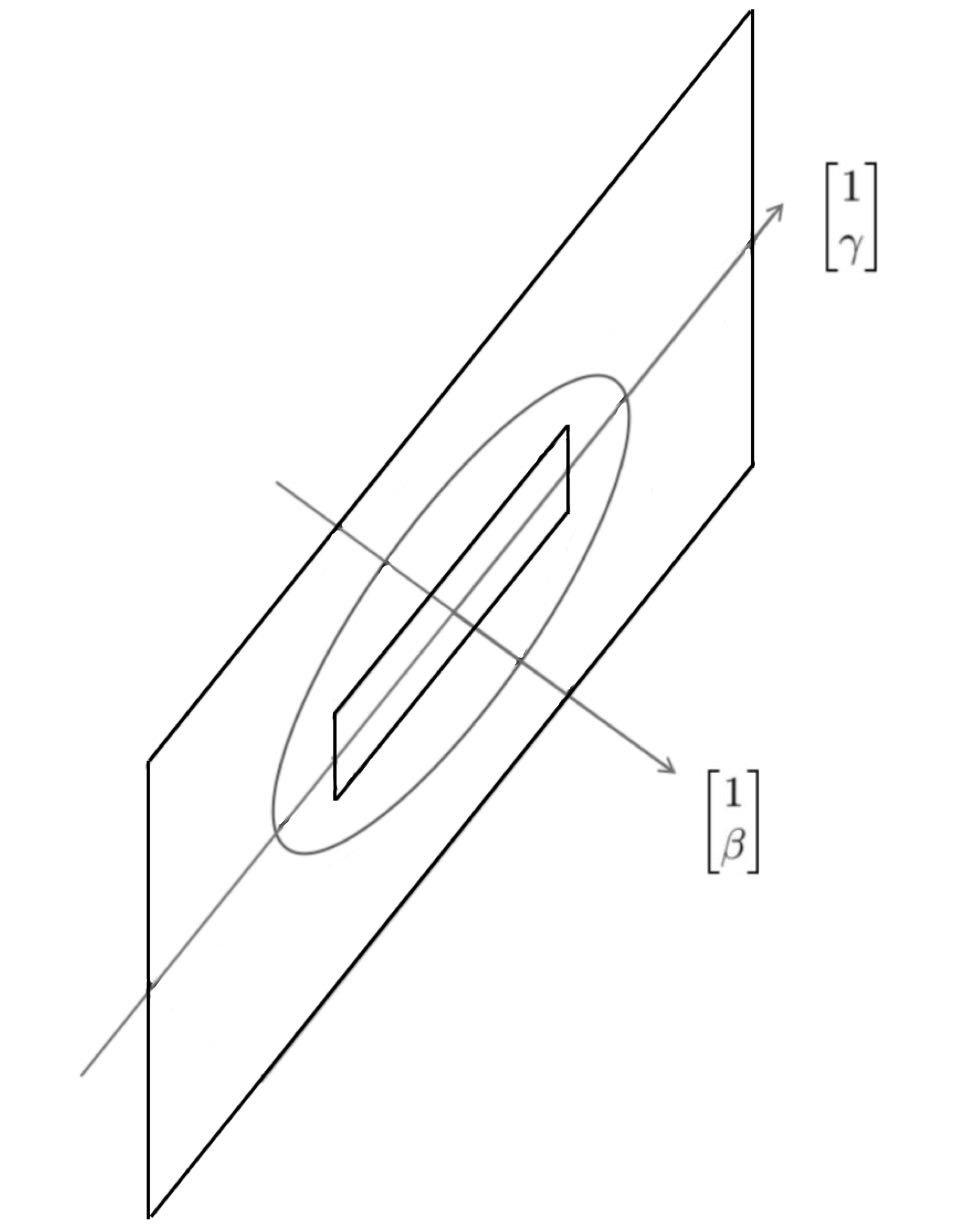}
    \end{center}
    \caption{The ellipse $R_{n,i}$ and parallelograms
      $\tilde{E}_{n,1}$ and $\tilde{E}_{n,2}$.}
    \label{fig:anotherparallelogram}
  \end{figure}
    
  We first give an upper bound of
  $\#((A^n-I)\tilde{B})\cap \mathbb{Z}^2$.  For simplicity,
  assume that
  $x_n=0$. 
  Note that
  \[
    \#(A^n-I)\tilde{B}\cap \mathbb{Z}^2\ge \#\tilde{E}_{n,1}\cap
    \mathbb{Z}^2,
  \]
  and for $z=\begin{bmatrix}z_1, & z_2\end{bmatrix}\in
  \mathbb{Z}^2\cap \tilde{E}_{n,1}$, we have
  \[
    z_1\in
    \Bigl(-\frac{M_1}{\sqrt{1+\gamma^2}}(\lambda_2^n-1)\tilde{r},
    \frac{M_1}{\sqrt{1+\gamma^2}}(\lambda_2^n-1)\tilde{r}\Bigr),
  \]
  and
  \[
    |\gamma z_1-z_2|<M_2(1-\lambda_1^n) \tilde{r}.
  \]
  Since $\lambda_1^n<\frac{1}{2}$ holds for large $n$,
  \[
    \#\tilde{E}_{n,1}\cap \mathbb{Z}^2\ge \#\Bigl\{z_1\in
    \Big[1,\frac{M_1}{\sqrt{1+\gamma^2}}(\lambda_2^n-1)\tilde{r}\Bigr)\cap
    \mathbb{Z} : \{\gamma
    z_1\}\in\big[0,\frac{1}{2}M_2\tilde{r}\bigr) \Bigr\}.
  \]
  Since
  $\gamma$ is irrational, by Theorem~\ref{udmo} the sequence
  $(\gamma
  z_1)_{z_1=1}^\infty$ is uniformly distributed modulo 1.  Taking
  the notation of Definition 2.2 in mind, with $N= \lfloor
  \frac{M_1}{\sqrt{1+\gamma^2}}(\lambda_2^n-1)\tilde{r}\rfloor$
  and
  $[a,b)=\big[0,\frac{1}{2}M_2\tilde{r}\bigr)$, then for large
  $n$,
  \begin{align*}
    \#\Bigl\{\, z_1\in
    & \Big[1,\frac{M_1}{\sqrt{1+\gamma^2}}
      (\lambda_2^n - 1) \tilde{r}\Bigr)\cap \mathbb{Z} :
      \{\gamma z_1\}\in\big[0,\frac{1}{2}M_2\tilde{r}\bigr)
      \,\Bigr\}\\
    &= A \biggl(\Bigl[ 0,\frac{1}{2}M_2\tilde{r} \Bigr), \biggl\lfloor
      \frac{M_1}{\sqrt{1+\gamma^2}} (\lambda_2^n-1) \tilde{r}
      \biggr\rfloor, (\{\gamma z_1\}) \biggr)\\
    & \ge
      \frac{M_1M_2}{3\sqrt{1+\gamma^2}}(\lambda_2^n-1)
      \tilde{r}^2 \ge \frac{M_1M_2}{3\sqrt{1+\gamma^2}}
      \tilde{r}^2 H_n.
  \end{align*}
  Similarly, we obtain that for large $n$
  \[
    \#\tilde{E}_{n,2}\cap \mathbb{Z}^2\le
    \frac{4\tilde{M}_1\tilde{M}_2}{\sqrt{1+\gamma^2}} \tilde{r}^2
    H_n.
  \]
  This finish the proof.
\end{proof}

For a ball $B:=B(x,r)$ and $n\ge1$,
\begin{equation*}\label{measure:ball}
\begin{split}
  \mu_n(B)&=\sum_{1\le i\le H_n\atop B\cap
    E_{n,i}\ne\emptyset}\frac{\mu(B\cap E_{n,i})}{\mu(E_n)}
  \\
  & \le H_n^{-1} \#\bigl\{\, 1\le i\le H_n : B\cap
  E_{n,i}\ne\emptyset \,\bigr\}.
\end{split}
\end{equation*}
For large $n$ depending on $B$ such that
$|E_{n,i}|\le \frac{1}{4}r$, we observe that
\[
  \{\, 1\le i\le H_n : B\cap E_{n,i}\ne\emptyset \,\} \subset
  \bigl\{\, 1\le i\le H_n : x_{n,i}\in \frac{5}{4}B \,\bigr\},
\]
where $x_{n,i}$ is the centre of $E_{n,i}$, and it follows from
Lemma~\ref{number} that for large $n$
\[
  \#\{\, 1\le i\le H_n : B\cap E_{n,i}\ne\emptyset \,\} \le
  \#\mathcal{P}_n\cap \frac{5}{4}B\lesssim r^2H_n,
\]
which implies that 
\[
  \mu_n(B) \lesssim   H_n^{-1}r^2H_n= r^2.
\]
Also
\begin{equation*}\label{measure:balll}
\begin{split}
  \mu_n(B)&=\sum_{1\le i\le H_n\atop B\cap
    E_{n,i}\ne\emptyset}\frac{\mu(B\cap E_{n,i})}{\mu(E_n)} \ge
  \sum_{1\le i\le H_n\atop E_{n,i}\subset
    B}\frac{\mu(E_{n,i})}{\mu(E_n)}
  \\
  & =H_n^{-1} \#\{\, 1\le i \le H_n : E_{n,i}\subset B \,\}.
\end{split}
\end{equation*}
Since 
\[
  \bigl\{\, 1\le i\le H_n : x_{n,i}\in
  \frac{3}{4}B \,\bigr\} \subset \{\, 1\le i\le H_n :
  E_{n,i}\subset B \,\},
\]
and 
\[
  \#\mathcal{P}_n\cap \frac{3}{4}B\asymp r^2H_n,
\]
we have
\[
  \mu_n(B)\gtrsim H_n^{-1} r^2H_n= r^2.
\]
We have now proven that
$(\mu_n)_{n\ge1}$ satisfies the inequalities \eqref{munballn}.

\subsection{Further estimates on $\boldsymbol{\mu_n}$} 

In the following, we shall show that there are constants $C$ and
$s$ such that $\mu_n (B(x,r)) \leq Cr^s$ holds for any
$x\in\T^2$, $r>0$ and $n$ large enough.

Consider the parallelogram $P\subset \mathbb{R}^2$ with vertices
\begin{align*}
  & \frac{f}{\sqrt{1 + \gamma^2}} \begin{bmatrix} 1 \\ \gamma
  \end{bmatrix} \pm \frac{\ell}{\sqrt{1 + \beta^2}} \begin{bmatrix} 1 \\
    \beta \end{bmatrix} ,  \quad \text{and}\\
  -&\frac{f}{\sqrt{1 + \gamma^2}} \begin{bmatrix} 1 \\
    \gamma \end{bmatrix} \pm \frac{\ell}{\sqrt{1 +
  \beta^2}} \begin{bmatrix} 1 \\
    \beta \end{bmatrix}.
\end{align*}
Then the direction of sides of $P$ are $\begin{bmatrix} 1 & \gamma
\end{bmatrix}^T$ and $\begin{bmatrix} 1 & \beta \end{bmatrix}^T$,
and the corresponding side lengths are $2f$ and $2\ell$.

\begin{lemma}\label{claim}
  There exists a constant $c_2>0$ depending only on $A$ such that
  if $f<c_2\ell^{-1}$, then the sets $P + z_1$ and $P + z_2$ do
  not intersect each other if $z_1$ and $z_2$ are two different
  elements in $\mathbb{Z}^2$. In particular, the projection of
  $P$ to $\mathbb{T}^2$ by $P \mapsto P \pmod 1$ is injective, or
  in other words, different points in $P$ do not overlap when
  projected to $\mathbb{T}^2$.
\end{lemma}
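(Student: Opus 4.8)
The plan is to reduce the non-overlapping statement to an arithmetic fact about a single nonzero lattice point and then exploit the irrationality of the eigen-slopes. Since $P$ is convex and centrally symmetric about the origin, its difference set satisfies $P - P = P + P = 2P$. Hence $P + z_1$ and $P + z_2$ meet for some $z_1, z_2 \in \mathbb{Z}^2$ exactly when $z_1 - z_2 \in 2P \cap \mathbb{Z}^2$. Thus it suffices to prove that $2P$ contains no nonzero integer point once $f < c_2 \ell^{-1}$; the injectivity of the projection to $\mathbb{T}^2$ is then immediate.

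To that end I would write a point of $2P$ in the eigenbasis as $u\,e_2 + w\,e_1$, where $e_2 = (1+\gamma^2)^{-1/2}[1,\gamma]^T$ and $e_1 = (1+\beta^2)^{-1/2}[1,\beta]^T$, so that membership in $2P$ reads $|u| \le 2f$ and $|w| \le 2\ell$. For an integer vector $z = [z_1, z_2]^T$, projecting onto the directions orthogonal to $e_1$ and to $e_2$ gives
\[
  u = \frac{\sqrt{1+\gamma^2}}{\gamma - \beta}(z_2 - \beta z_1), \qquad w = \frac{\sqrt{1+\beta^2}}{\gamma-\beta}(\gamma z_1 - z_2),
\]
so the two side-length constraints become the linear-form bounds $|z_2 - \beta z_1| \le C_2 f$ and $|z_2 - \gamma z_1| \le C_1 \ell$, with $C_1, C_2$ depending only on $\beta, \gamma$ and hence only on $A$.

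The heart of the argument is an arithmetic lower bound on the product of these two forms. Using $\gamma = (\lambda_2 - a)/b$ and $\beta = (\lambda_1 - a)/b$ together with $\lambda_1 + \lambda_2 = a + d$ and $\lambda_1\lambda_2 = ad - bc$, one computes $\beta + \gamma = (d - a)/b$ and $\beta\gamma = -c/b$, whence
\[
  (z_2 - \beta z_1)(z_2 - \gamma z_1) = \frac{1}{b}\bigl(b z_2^2 - (d - a) z_1 z_2 - c z_1^2\bigr) = \frac{Q(z_1, z_2)}{b},
\]
where $Q(z_1, z_2) \in \mathbb{Z}$. Since $\lambda_1$ is a nonzero eigenvalue of modulus $<1$ of an integer matrix, it cannot be an integer and is therefore irrational, so $\beta$ and $\gamma$ are irrational; consequently $Q(z_1,z_2) = 0$ forces $z_1 = z_2 = 0$, and for $z \neq 0$ one has $|Q(z_1,z_2)| \ge 1$ and thus $|(z_2 - \beta z_1)(z_2 - \gamma z_1)| \ge |b|^{-1}$. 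Combined with the two linear-form bounds this yields $|b|^{-1} \le C_1 C_2\, f\ell$, so taking $c_2 = (|b|\,C_1 C_2)^{-1}$ makes the existence of a nonzero $z \in 2P \cap \mathbb{Z}^2$ impossible as soon as $f\ell < c_2$.

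The step I expect to be the main obstacle is the arithmetic one: recognising the correct integer quadratic form $Q$ and checking that its coefficients are integers, so that irrationality of the slopes upgrades ``nonzero'' to ``at least $1$ in absolute value''. One should also record explicitly that $c_2$ depends only on $A$ (through $b$, $\beta$, $\gamma$) and treat the degenerate case $b = 0$, where $A$ is triangular and the eigendirections must be described differently, by an entirely analogous product-of-forms computation.
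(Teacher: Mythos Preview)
Your proof is correct, and the core idea coincides with the paper's: both arguments exploit that the eigenslope $\gamma$ is a real quadratic irrational and hence badly approximable by rationals. The paper invokes Liouville's inequality $|\gamma - p/q| > c\,q^{-2}$ as a black box and pairs it with the bound on the horizontal extent of $P$; you instead write down the integer binary form $Q(z_1,z_2)=bz_2^2-(d-a)z_1z_2-cz_1^2$, observe that it equals $b$ times the product of the two eigen-linear forms, and use $|Q|\ge 1$ for nonzero integer arguments. This product-of-conjugates computation is precisely the proof of Liouville's inequality in the quadratic case, so the two arguments are mathematically equivalent; your version is a little more symmetric (it handles both eigendirections at once) and yields an explicit constant $c_2=(|b|\,C_1C_2)^{-1}$ rather than an unspecified Liouville constant. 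One minor point: your caveat about the degenerate case $b=0$ is in fact vacuous under the standing hypotheses, since $b=0$ would make $A$ lower-triangular with integer eigenvalues $a,d$, contradicting $0<|\lambda_1|<1$.
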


\begin{proof}
  If there is no risk of confusion, we shall often write $P$
  instead of $P\pmod1$.  Note that
  $|P\cap (\{0\}\times \mathbb{R})|\le
  2\sqrt{1+\gamma^2}\ell$

  Consider a line with slope $\gamma$ going out from the point
  $(0,0)$. The equation of this line is $y = \gamma x$, and
  $\gamma $ is an algebraic number of degree $2$. This line will
  wrap around the torus and come close to the point
  $(0,0)$. Suppose that $p$ and $q$ are integers such that
  $|\gamma q - p| < r$ with $0<r<2\sqrt{1+\gamma^2}\ell $. Then
  $|\gamma - pq^{-1} | < q^{-1}r $, but Liouville’s theorem on
  Diophantine approximation implies that
  $|\gamma - pq^{-1} | > cq^{-2}$ where $c$ is a constant
  depending only on $\gamma$. Hence it follows that
  $q \ge c\frac{1}{2\sqrt{1+\gamma^2}}\ell^{-1}$ and that
  $|\gamma q - p| = q |\gamma - p q^{-1}| > cq^{-1} >
  2\sqrt{1+\gamma^2}\ell$.

  This implies that if $2f< \frac{c}{2}\ell^{-1}$, then the sets
  $P + z_1$ and $P + z_2$ do not intersect each other if $z_1$
  and $z_2$ are two different elements in $\mathbb{Z}^2$, and the
  parallelogram $P\pmod 1$ does not overlap
  itself. 
\end{proof}
 
Let $B:=B(x,r)\subset \T^2$. Define
\[
  \ell_{1,n}=\frac{1}{n}\log|\lambda_1^n-1|\quad 
  {\rm and }
  \quad \ell_{2,n}=\frac{1}{n}\log|\lambda_2^n-1|.
\]

In the following we consider the parallelogram with vertices
\begin{align*}
  &x+ c_1^{-1}r\Bigl(\frac{1}{\sqrt{1+\beta^2}}\begin{bmatrix}1
    \\ \beta \end{bmatrix}\pm
  \frac{1}{\sqrt{1+\gamma^2}}\begin{bmatrix}1 \\
    \gamma \end{bmatrix}\Bigr),\\
  &x-  c_1^{-1}r\Bigl(\frac{1}{\sqrt{1+\beta^2}}\begin{bmatrix}1
    \\ \beta \end{bmatrix}\pm
  \frac{1}{\sqrt{1+\gamma^2}}\begin{bmatrix}1 \\
    \gamma \end{bmatrix}\Bigr),
\end{align*}
where
$c_1=\frac{\sqrt{1+\beta^2}\sqrt{1+\gamma^2}}{|\beta-\gamma|}$.
This parallelogram contains $B$, and for simplicity we will use
$B$ to denote this parallelogram in the following arguments.

\begin{itemize}
\item Case 1: $\tau>\frac{1}{2}\log|\lambda_2|$. 

  First of all, we discuss the separation between different
  parallellograms $E_{n,i}$.

  Taking $\ell=2c_2^{-1}|\lambda_2^n-1|^{-\frac{1}{2}}$ and
  $f=|\lambda_2^n-1|^{\frac{1}{2}}$, note that
  $c_1^{-1}e^{-n\tau}<\ell$ for large $n$, then $P$ contains
  $B(0,e^{-\tau n})$.
  Using Lemma~\ref{claim}, the parallelogram $P$ does not overlap itself,
  and moreover $P+z_1$ and $P+z_2$ don't intersect for different
  $z_1$ and $z_2$ in $\mathbb{Z}^2$. Hence
  $\{\, (A^n-I)^{-1}(P+z) : z\in\mathbb{Z}^2 \,\}$ do not
  intersect each other. Note that for any $i$, there exists a
  unique $z$ such that $R_{n,i}\subset (A^n-I)^{-1}(P+z)$. We
  observe that the side-lengths of $(A^n-I)^{-1}(P+z)$ are
  $2\ell
  (1-\lambda_1^n)^{-1}=2c_2^{-1}e^{-n(\frac{1}{2}\ell_{2,n}+\ell_{1,n})}$
  and $2e^{-\frac{n}{2}\ell_{2,n}}$, which implies that we have
  the separation $\asymp|\lambda_2^n-1|^{-\frac{1}{2}}$ in the
  direction $\begin{bmatrix} 1 & \gamma \end{bmatrix}^T$ and
  $\asymp|\lambda_2^n-1|^{-\frac{1}{2}}$ in the direction
  $\begin{bmatrix} 1 & \beta \end{bmatrix}^T$ between the
  parallelograms $\{E_{n,i}\}_i$. This means that
  $(A^n-I)^{-1}(P+z)$ is contained in a parallelogram with
  side-lengths $\asymp|\lambda_2^n-1|^{-\frac{1}{2}}$ in the
  direction $\begin{bmatrix} 1 & \gamma \end{bmatrix}^T$ and
  $\asymp|\lambda_2^n-1|^{-\frac{1}{2}}$ in the direction
  $\begin{bmatrix} 1 & \beta \end{bmatrix}^T$, and for two
  different $z$, these two parallellograms do not intersect.
  
  
  
\begin{itemize}
\item [(i)] If
  $r\le \frac{c_1}{\sqrt{1+\beta^2}}e^{-n(\tau +\ell_{2,n})}$,
  then $B$ intersects at most one parallelogram among
  $\{E_{n,i}\}_i$ and it can be completely contained in one of
  them. Hence
  \[
    \mu_n(B)\lesssim e^{2n\tau}r^2\lesssim
    r^{\frac{2\ell_{2,n}}{\tau+\ell_{2,n}}}.
  \]
  
\item [(ii)] If
  $ \frac{c_1}{\sqrt{1+\beta^2}}e^{-n(\tau +\ell_{2,n})}<r\le
  \frac{c_1}{\sqrt{1+\gamma^2}}e^{- n(\tau +\ell_{1,n})}$, in
  this case, $B$ intersects at most one parallelogram, but cannot
  be contained in one of them. Then
  \[
    \mu_n(B)\lesssim e^{2n\tau}re^{- n(\tau
      +\ell_{2,n})}=re^{n(\tau -\ell_{2,n})}.
  \]
  When $\tau\ge \ell_{2,n}$, by
  $r\lesssim e^{- n(\tau +\ell_{1,n})}$, we have
  \[
    \mu_n(B)\lesssim r^{1 - \frac{\tau
        -\ell_{2,n}}{\tau+\ell_{1,n}}} = r^{\frac{\ell_{1,n} +
        \ell_{2,n}}{\tau+\ell_{1,n}}}.
  \]
  When $\tau< \ell_{2,n}$, by
  $e^{- n(\tau +\ell_{2,n})}\lesssim r$, we have
  \[
    \mu_n(B) \lesssim r^{1-\frac{\tau -\ell_{2,n}}{\tau +
        \ell_{2,n}}} = r^{\frac{2\ell_{2,n}}{\tau + \ell_{2,n}}}.
  \]
  Combining these sub-cases, we conclude that 
  \[
    \mu_n(B)\lesssim
    r^{\min\bigl\{\frac{\ell_{1,n}+\ell_{2,n}}{\tau+\ell_{1,n}},\frac{2\ell_{2,n}}{\tau+\ell_{2,n}}\bigr\}}.
  \]

\item [(iii)] If
  $\frac{c_1}{\sqrt{1+\gamma^2}}e^{- n(\tau +\ell_{1,n})}\le
  r<2c_2^{-1}e^{-n(\frac{1}{2}\ell_{2,n}+\ell_{1,n})}$, then
  \[
    \#\{\, 1\le i\le H_n : B\cap E_{n,i}\ne\emptyset \,\}
    \lesssim 1,
  \] 
  which implies that
  \begin{equation*}
    \begin{split}
      \mu_n(B)&\lesssim e^{2n\tau}e^{- n(\tau +\ell_{1,n})} e^{- n(\tau +\ell_{2,n})}=e^{- n(\ell_{1,n} +\ell_{2,n})}\\
      &\lesssim r^{\frac{\ell_{1,n} +\ell_{2,n}}{\tau
          +\ell_{1,n}}}.
    \end{split}
  \end{equation*}

\item [(iv)] If
  $r\ge 2c_2^{-1}e^{-n(\frac{1}{2}\ell_{2,n}+\ell_{1,n})}$, in
  this case, $B$ intersects
  \[
    \lesssim \frac{r^2}{e^{-n(\frac{1}{2} \ell_{2,n} +
        \ell_{1,n})} e^{-\frac{n}{2}\ell_{2,n}}}
  \]
  parallelograms, and we get
  \begin{equation*}
    \begin{split}
      \mu_n(B)&\lesssim  e^{2n\tau}\frac{r^2}{e^{-n(\ell_{2,n}+\ell_{1,n})}}e^{- n(\tau +\ell_{1,n})} e^{- n(\tau +\ell_{2,n})}\\
      &= r^2.
    \end{split}
  \end{equation*}

\end{itemize}
Combining (i)--(iv), for $\epsilon >0$, we have
\[
  \mu_n(B)\lesssim r^{\min\{\frac{\ell_{1,n} +\ell_{2,n}}{\tau
      +\ell_{1,n}},
    \frac{2\ell_{2,n}}{\tau+\ell_{2,n}}\}}<r^{s_1-\epsilon},
\]
where
$s_1=\min\{\frac{\log|\lambda_2|}{\tau },
\frac{2\log|\lambda_2|}{\tau+\log|\lambda_2|}\}$. Letting
$\epsilon\to0$, it follows in this case that
\[
  \dimh R_\tau\ge s_1.
\]

\item Case 2: $\tau<\frac{1}{2}\log|\lambda_2|$.
  \label{eq:case2}
  
  Taking $\ell=\frac{1}{3}$ and $f=\min\{\frac{1}{3}, 3c_2\}$,
  $B(0,e^{-n\tau})$ is contained in $P$, which gives the
  separation $\asymp e^{-n\ell_{1,n}}$ in the direction
  $\begin{bmatrix} 1 & \beta \end{bmatrix}^T$ and
  $\asymp e^{-n\ell_{2,n}}$ in the direction
  $\begin{bmatrix} 1 & \gamma \end{bmatrix}^T$. (Separation means
  the same thing as in Case~1.) It implies that for any ball, the
  number of parallelograms intersecting the ball in the direction
  $\begin{bmatrix} 1 & \beta \end{bmatrix}^T$ is uniformly
  bounded, which means that it suffices to consider the direction
  $\begin{bmatrix} 1 & \gamma \end{bmatrix}^T$.

  Now put $\ell=c_1^{-1}e^{-\tau n}$ and $f=c_3e^{\tau n}$, where
  $c_3<c_2c_1$. Then $P$ contains $B(0,e^{-n\tau})$. Hence
  $E_{n,i}$ is contained in the parallelogram with side-lengths
  $c_1^{-1}e^{-n(\tau+\ell_{1,n})}$ in the direction
  $\begin{bmatrix} 1 & \beta \end{bmatrix}^T$ and
  $c_3e^{n(\tau-\ell_{2,n})}$ in the direction
  $\begin{bmatrix} 1 & \gamma \end{bmatrix}^T$.



  \begin{itemize}
  \item [(i)]
    $r\le \frac{c_1}{\sqrt{1+\beta^2}}e^{-n(\tau
      +\ell_{2,n})}$. In this case, $B$ intersects at most one
    parallelogram in the direction
    $\begin{bmatrix} 1 & \gamma \end{bmatrix}^T$. Then
    \[
      \mu_n(B)\lesssim e^{2\tau n}r^2\lesssim
      r^{\frac{2\ell_{2,n}}{\tau+\ell_{2,n}}}.
    \]
  \item [(ii)]
    $ \frac{c_1}{\sqrt{1+\beta^2}}e^{-n(\tau +\ell_{2,n})}< r\le
    c_3e^{n(\tau-\ell_{2,n})}$. In this case, $B$ intersects at
    most one parallelogram in the direction
    $\begin{bmatrix} 1 & \gamma \end{bmatrix}^T$.  Then
    \[
      \mu_n(B)\lesssim e^{2\tau
        n}re^{-n(\tau+\ell_{2,n})}=re^{n(\tau-\ell_{2,n})}\lesssim
      r^{\frac{2\ell_{2,n}}{\tau+\ell_{2,n}}}.
    \]
  \item [(iii)] $r>c_3e^{n(\tau-\ell_{2,n})}$. In this case, $B$
    intersects at most
    \[
      \frac{r}{e^{n(\tau-\ell_{2,n})}}
    \]
    parallelograms. The intersection of $B$ and an ellipse is
    contained in a rectangle with side-lengths
    $e^{-n(\tau+\ell_{2,n})}$ and
    $\min\{r, e^{-n(\tau+\ell_{1,n})}\}$. Then
    \begin{equation*}
      \begin{split}
        \mu_n(B)&\lesssim  \frac{r}{e^{n(\tau-\ell_{2,n})}}e^{2\tau n}e^{-n(\tau+\ell_{2,n})}\min\{r, e^{-n(\tau+\ell_{1,n})}\}\\
        &<r\min\{r, e^{-n(\tau+\ell_{1,n})}\}\le r^2.
      \end{split}
    \end{equation*}
  \end{itemize}
  Combining (i)--(iii), for $\epsilon >0$, we have
  \[
    \mu_n(B)\lesssim r^{\frac{2\ell_{2,n}}{\tau+\ell_{2,n}}} <
    r^{\frac{2 \log|\lambda_2|}{\tau + \log|\lambda_2|}-\epsilon
    }.
  \]
  Letting $\epsilon\to0$, we get
  \[
    \dimh R_\tau\ge
    \frac{2\log|\lambda_2|}{\tau+\log|\lambda_2|}.
  \]

\item Case 3:   $\tau=\frac{1}{2}\log|\lambda_2|$.

  For $\tilde{\tau}>\tau$, by Case 1 and
  $R_\tau\supset R_{\tilde{\tau}}$, we have
  \[
    \dimh R_\tau\ge \dimh R_{\tilde{\tau}}\ge
    \frac{2\log|\lambda_2|}{\tilde{\tau}+\log|\lambda_2|}.
  \]
  Letting $\tilde{\tau}$ tend to $\tau$, we conclude that
  \[
    \dimh R_\tau\ge
    \frac{2\log|\lambda_2|}{\tau+\log|\lambda_2|}.
  \]
\end{itemize}
Therefore we conclude from Cases 1--3 that  for $\tau>0$, 
\[
  \dimh R_\tau \ge \min\Bigl\{\frac{\log|\lambda_2|}{\tau },
  \frac{2\log|\lambda_2|}{\tau+\log|\lambda_2|}\Bigr\}.
\]

\section{Proof to the Example~\ref{ex:firstexample}}
\label{sec:example}

Put $R_n=\{x\in\T^3: A^nx\pmod 1\in B(x,e^{-n\tau})\}.$ 

Note that
\[
  \begin{bmatrix}1\\0\\0\end{bmatrix}, \quad \begin{bmatrix}
    0\\1\\\frac{\lambda-a}{b} \end{bmatrix},
  \quad \begin{bmatrix}
    0\\1\\\frac{\lambda^{-1}-a}{b}\end{bmatrix}
\]
are eigenvectors with eigenvalues $m$, $\lambda$ and
$\lambda^{-1}$, denoted by $\beta_1,\beta_2,\beta_3$
respectively.

Write
\[
  P_n = \Bigl\{\, \Bigl( \frac{i_1}{m^n-1}, \frac{i_2}{S_n},
  \frac{i_3}{S_n} \Bigr) : 0\le i_1<m^n-1,0\le
  i_j<S_n,j=1,2 \,\Bigr\},
\]
where 
\begin{equation}
S_n=
\begin{cases}
\sum_{j=-k}^k\lambda^j \quad &n=2k+1,\\
\sqrt{(a+d)^2-4}(\lambda^k-\lambda^{-k})\quad &n=2k.
\end{cases}
\end{equation}
Note that $S_n\asymp \lambda^{-n/2}$ for large $n$.

Then by Lemmata~2.4--2.5 and Remark 2.7 in \cite{hupe},
\begin{itemize}
\item for $n=2k+1$, $P_n$ consists of all periodic points with
  period $n$.
\item for $n=2k$, $P_n$ contains all periodic points with period
  $n$.
\end{itemize}

For $n\ge 1$, denote
\[
  r_{n,1}=e^{-n\tau}(1-\lambda^{-n})^{-1},\quad
  r_{n,2}=e^{-n\tau}(\lambda^n-1)^{-1},\quad
  r_{n,3}=e^{-n\tau}(m^{n}-1)^{-1}.
\]
Since we have assumed that $m>\lambda^{1/2}$, then we conclude
that
\[
  (m^n-1)^{-1}<S_n
\]
for large $n$.

With these in mind, we prove the dimension formula in the
example, and the proof is divided in two parts.

\subsection{The upper bound}

The proof relies on finding an efficient covering by balls of the
limsup set $R_\tau$, and notice that
$R_\tau\subset \bigcup_{n\ge N}R_n$, $N\ge1$.  We shall use balls
of radius $r_{n,k}$ to cover $R_n$. We now estimate the number of
such balls and corresponding Hausdorff dimension.

\begin{itemize}
\item[(i)] {\bf Case $\boldsymbol{k=1}$}.
  
  \begin{itemize}
  \item If $\tau<\log m$, the number of such balls is
    \begin{multline*}
      \lesssim (m^n-1) (\lambda^n-1) (1-\lambda^{-n})
      \Bigl(\frac{r_{n,1}}{(m^n-1)^{-1}} \Bigr)^{-1}\\
      = e^{n\tau}
      (\lambda^n-1) (1 - \lambda^{-n})^2.
    \end{multline*}
    It  gives 
    \[
      \dimh R_\tau\le \frac{\tau+\log\lambda}{\tau}.
    \]
  \item If $\tau\ge\log m$, the number of such balls is
    \[
      \lesssim (m^n-1) (\lambda^n-1) (1-\lambda^{-n}).
    \]
    It follows that 
    \[
      \dimh R_\tau\le \frac{\log m+\log\lambda}{\tau}.
    \]
  \end{itemize}

\item[(ii)] {\bf Case $\boldsymbol{k=2}$}.
  
  \begin{itemize}
  \item If $m\le \lambda$, then for any $\tau$, we have
    \[
      \tau+\log\lambda>\log m \quad {\rm and}\quad r_{n,2}\le
      r_{n,3} \ (<r_{n,1}),
    \]
    and for large $n$, 
    \[
      r_{n,2}<(m^n-1)^{-1}.
    \]
    The number of covering balls of radius $r_{n,2}$ is
    \[
      \lesssim (m^n-1) (\lambda^n-1) (1-\lambda^{-n})
      \Bigl(\frac{r_{n,1}}{r_{n,2}}\Bigr)
      \Bigl(\frac{r_{n,3}}{r_{n,2}}\Bigr) = (\lambda^n-1)^3.
    \]
    In this case, we get an estimate 
    \[
      \dimh R_\tau\le \frac{3\log\lambda}{\tau+\log\lambda}.
    \]
  \item If $\lambda<m$, we have $r_{n,3}<r_{n,2}\ (<r_{n,1})$, then
    there are two sub-cases.
    \begin{itemize}
    \item If $\tau<\log m-\log\lambda$, then for large $n$
      \[
        r_{n,2}>(m^n-1)^{-1}.
      \]
      Hence the number of such balls is
      \begin{multline*}
        \lesssim
        (m^n-1) (\lambda^n-1) (1-\lambda^{-n})
        \Bigl(\frac{r_{n,1}}{r_{n,2}}\Bigr)
        \Bigl(\frac{r_{n,2}}{(m^n-1)^{-1}}\Bigr)^{-1} \\
        = (\lambda^n-1)^3 e^{n\tau}.
      \end{multline*}
      It follows that 
      \[
        \dimh R_\tau\le \frac{\tau+3\log\lambda}{\tau +
          \log\lambda}.
      \]
      
    \item If $\tau>\log m-\log\lambda$, then for $n$ large
      \[
        r_{n,2} < (m^n-1)^{-1}.
      \]
      Hence the number of such balls is
      \[
        \lesssim (m^n-1) (\lambda^n-1) (1-\lambda^{-n})
        \Bigl(\frac{r_{n,1}}{r_{n,2}}\Bigr) =
        (\lambda^n-1)^2(m^n-1).
      \]
      We have
      \[
        \dimh R_\tau\le \frac{2\log\lambda+\log
          m}{\tau+\log\lambda}.
      \]
    \end{itemize}
  \end{itemize}

\item[(iii)] {\bf Case $\boldsymbol{k=3}$}.
  
  \begin{itemize}
  \item If $m\le \lambda$, we have
    $r_{n,2}\le r_{n,3}\ r_{n,1})$. The number of covering
    balls is
    \[
      \lesssim (m^n-1) (\lambda^n-1) (1-\lambda^{-n})
      \Bigl(\frac{r_{n,1}}{r_{n,3}}\Bigr) = (m^n-1)^2
      (\lambda^n-1).
    \]
    It  gives 
    \[
      \dimh R_\tau\le \frac{2\log m+\log\lambda}{\tau+\log m}.
    \]
    
  \item If $\lambda<m$, we have $r_{n,3}<r_{n,2}\ (<r_{n,1})$.
    The number of the covering balls is
    \[
      \lesssim (m^n-1) (\lambda^n-1) (1-\lambda^{-n}) \Bigl(
      \frac{r_{n,1}}{r_{n,3}}\Bigr)
      \Bigl(\frac{r_{n,2}}{r_{n,3}}\Bigr) = (m^n-1)^3.
    \]
    Then 
    \[
      \dimh R_\tau\le \frac{3\log  m}{\tau+\log m}.
    \]
  \end{itemize}
\end{itemize}
    
We conclude that 
\begin{itemize}
\item if $m\le \lambda$, 
  \[
    \dimh R_\tau\le
    \min\Bigl\{\frac{3\log\lambda}{\tau+\log\lambda},\frac{2\log
      m+\log\lambda}{\tau+\log
      m},\frac{\tau+\log\lambda}{\tau},\frac{\log
      m+\log\lambda}{\tau}\Bigr\}.
  \]
      
\item if $m> \lambda$, 
  \begin{multline*}
    \dimh R_\tau\le
    \min\Bigl\{\frac{\tau+3\log\lambda}{\tau+\log\lambda},\frac{3\log
      m}{\tau+\log m},\frac{2\log\lambda+\log
      m}{\tau+\log\lambda},\\
    \frac{\tau+\log\lambda}{\tau},\frac{\log
      m+\log\lambda}{\tau}\Bigr\}.
  \end{multline*}
\end{itemize}
      
\subsection{The lower bound}

Define 
\[
  \mu_n=\frac{1}{\mu(R_n)}\mu|_{R_n},
\]
where $\mu$ denotes the Lebesgue measure restricted on
$\T^3$. Since the periodic points are well-distributed,
$(\mu_n)_n$ satisfies the inequalities \eqref{munballn}. In the
following, we estimate $\mu_n(B)$ for any ball $B:=B(x,r)$ and
any $n$.

Recall that we have assumed that $m>\lambda^{1/2}$ which implies
that $\log m>\frac{1}{2}\log\lambda$.

Assume that $R_n=\bigcup_{i}R_{n,i}$. For $n\ge1$, 
\[
  \mu_n(B)\le (\mu(R_n))^{-1}\mu(B\cap R_n)\le
  (\mu(R_n))^{-1}\sum_{i\colon R_{n,i}\cap
    B\ne\emptyset}\mu(B\cap R_{n,i}).
\]

Note that when $\tau>\frac{1}{2}\log\lambda$, the distribution of
periodic points gives that the separation of the ellipsoids contained
in $R_n$ is $\asymp (m^n-1)^{-1}$ in the direction $\beta_1$, and
$\asymp S_n^{-1}$ in both directions $\beta_2$ and $\beta_3$.

If $r\le (m^n-1)^{-1}$, then $B$ intersects  at most $1$ ellipsoid;
If  $(m^n-1)^{-1}<r\le S_n^{-1},$ then $B$ intersects  
\[
  \lesssim \frac{r}{(m^n-1)^{-1}}
\]
ellipsoids;
If $S_n^{-1}<r$, then $B$ intersects 
\[
  \lesssim
  \frac{r^3}{(m^n-1)^{-1}(\lambda^n-1)^{-1}(1-\lambda^{-n})^{-1}}
\]
ellipsoids.
 
When $\tau\le \frac{1}{2}\log\lambda$, we need to consider the
distance among those ellipsoids. The separation is
$\asymp (m^n-1)^{-1}$ in the direction $\beta_1$. Recalling
Case~2 on Page~\pageref{eq:case2}, the separation is $\asymp 1$ in
the direction $\beta_3$ and $\asymp e^{n(\tau-\log\lambda)}$ in
the direction $\beta_2$. If $r\le e^{n(\tau-\log\lambda)},$
then $B$ intersects
\[
  \lesssim \frac{r}{\min\{r,(m^n-1)^{-1}\}}
\]
ellipsoids; If $e^{n(\tau-\log\lambda)}<r$, then $B$ intersects
\[
  \lesssim \frac{r^2}{(m^n-1)^{-1}e^{n(\tau-\log\lambda)}}
\]
ellipsoids.

{\bf Dealing with  Case (i), $\boldsymbol{m> \lambda}$.}

\begin{enumerate}
\item Case $m>\lambda^{3/2}$
  \begin{itemize}
  \item When $\tau>\log m$, in this case, for large $n$, we have
    \[
      r_{n,3}<r_{n,2}<r_{n,1}<(m^n-1)^{-1}<S_n^{-1}.
    \]
    \begin{itemize}
    \item If $r\le r_{n,3}$, then
      \[
        \mu_n(B)\lesssim e^{3n\tau} r^3<r^{\frac{3\log
            m}{\tau+\log m}}.
      \]
    \item If $r_{n,3}<r\le r_{n,2}$, then we have
      \[
        \mu_n(B)\lesssim e^{3n\tau} r^2r_{n,3}\asymp
        r^2e^{n(2\tau-\log m)}<r^{\frac{2\log\lambda+\log
            m}{\tau+\log\lambda}}.
      \]
      The last inequality follows from $\tau>\log m$ and
      $e^{n}\le r^{-\frac{1}{\tau+\log\lambda}}$.
    \item If $r_{n,2}<r\le r_{n,1}$, then
      \[
        \mu_n(B)\lesssim e^{3n\tau} rr_{n,2}r_{n,3}\asymp
        re^{n(\tau-\log
          m-\log\lambda)}
        .
      \]
      For $\tau>\log m+\log \lambda$, using
      $e^n<r^{-\frac{1}{\tau}}$, we get
      \[
        \mu_n(B)<r^{\frac{\log m+\log\lambda}{\tau}}.
      \]
      For $\tau\le \log m+\log \lambda$, using
      $e^{-n}<r^{-\frac{1}{\tau+\log\lambda}}$, we have
      \[
        \mu_n(B)<r^{\frac{2\log\lambda+\log
            m}{\tau+\log\lambda}}.
      \]
      It follows from the last two inequalities that
      \[
        \mu_n(B)<r^{\min\{\frac{2\log\lambda+\log
            m}{\tau+\log\lambda},\frac{\log
            m+\log\lambda}{\tau}\}}.
      \]
       
    \item If $r_{n,1}<r\le (m^n-1)^{-1}$,
      \begin{equation*}
        \begin{split}
          \mu_n(B)&\lesssim e^{3n\tau} r_{n,1}r_{n,2}r_{n,3}=((\lambda^n-1) (m^n-1) (1-\lambda^{-n}))^{-1}\\
          &<r^{\frac{\log m+\log\lambda}{\tau}}.
        \end{split}
      \end{equation*}
      
    \item If $(m^n-1)^{-1}<r\le S_n^{-1}$, then we use
      $r_{n,1}<r$
      and get that
      \[
        \mu_n(B)\lesssim e^{3n\tau}
        r_{n,1}r_{n,2}r_{n,3}\frac{r}{(m^n-1)^{-1}}\asymp
        r(\lambda^n-1)^{-1}<r^{\frac{\tau+\log\lambda}{\tau}}.
      \]
      
    \item If $r>S_n^{-1}$,
      \[
        \mu_n(B)\lesssim e^{3n\tau} r_{n,1}r_{n,2}r_{n,3}
        \frac{r^3}{(m^n-1)^{-1} (\lambda^n-1)^{-1}
          (1-\lambda^{-n})^{-1}} = r^3.
      \]
    \end{itemize}

    By combining these estimates with Lemma~\ref{lip}, when
    $\tau>\log m$, we get that
    \[
      \dimh R_\tau\ge
      \min\Bigl\{\frac{3\log m}{\tau+\log m}
      ,\frac{2\log\lambda+\log
        m}{\tau+\log\lambda},\frac{\tau+\log\lambda}{\tau},
      \frac{\log m+\log\lambda}{\tau}\Bigr\}.
    \]
     
  \item When $\log m-\log\lambda<\tau\le\log m$, for large $n$,
    we have
    \[
      r_{n,3}<r_{n,2}<(m^n-1)^{-1}<r_{n,1}<S_n^{-1}.
    \]
    \begin{itemize}
    \item If $r\le (m^n-1)^{-1}$, similarly to the previous case,
      we get
      \begin{equation*}
        \begin{split}
          \mu_n(B)&\lesssim e^{3n\tau}r\min\{r,r_{n,3}\}\min\{r,r_{n,2}\}\\
          &<r^{\min\{\frac{3\log m}{\tau+\log m} ,
            \frac{2\log\lambda+\log m}{\tau+\log\lambda} \}}.
        \end{split}
      \end{equation*}
    \item If $(m^n-1)^{-1}<r \le r_{n,1}$, then
      \[
        \mu_n(B)\lesssim e^{3n\tau}
        rr_{n,3}r_{n,2}\frac{r}{(m^n-1)^{-1}}\asymp
        r^2e^{n(\tau-\log\lambda)}
        .
      \]
      For $\tau>\log\lambda$, we use $e^n<r^{-\frac{1}{\tau}}$,
      and conclude that
      \[
        \mu_n(B)<r^{\frac{\tau+\log\lambda}{\tau}}.
      \]
      For $\tau\le \log \lambda$, using
      $e^{-n}<r^{-\frac{1}{\tau+\log\lambda}}$, we get
      \[
        \mu_n(B)<r^{\frac{\tau+3\log\lambda}{\tau+\log\lambda}}.
      \]
      Therefore
      \[
        \mu_n(B)\lesssim
        r^{\min\{\frac{\tau+3\log\lambda}{\tau+\log\lambda},
          \frac{\tau+\log\lambda}{\tau}\}}.
      \]
      
    \item If $r_{n,1}<r \le S_n^{-1}$, we obtain
      \[
        \mu_n(B)\lesssim e^{3n\tau}
        r_{n,1}r_{n,2}r_{n,3}\frac{r}{(m^n-1)^{-1}}\asymp
        e^{-n\log\lambda}<r^{\frac{\tau+\log\lambda}{\tau}}.
      \]
       
    \item If $S_n^{-1}<r $, then
      \[
        \mu_n(B)<r^3.
      \]
    \end{itemize}
    We conclude from these cases that
    \[
      \dimh R_\tau\ge
      \min\Bigl\{\frac{\tau+3\log\lambda}{\tau+\log\lambda},
      \frac{3\log m}{\tau+\log m} , \frac{2\log\lambda+\log
        m}{\tau+\log\lambda},\frac{\tau+\log\lambda}{\tau}\Bigr\}.
    \]

  \item When $\frac{1}{2}\log\lambda<\tau\le \log m-\log\lambda$,
    here for large $n$, we have
    \[
      r_{n,3}<(m^n-1)^{-1}<r_{n,2}<r_{n,1}<S_n^{-1}.
    \]
    \begin{itemize}
    \item If $r\le (m^n-1)^{-1}$, then
      \[ \mu_n(B)\lesssim e^{3n\tau} r^2\min\{r,
        r_{n,3}\}<r^{\min\{\frac{3\log m}{\tau+\log m}
          ,\frac{2\log\lambda+\log m}{\tau+\log\lambda} \}}.
      \]
    \item If $(m^n-1)^{-1}<r\le S_n^{-1}$, we get
      \begin{equation*}
        \begin{split}
          \mu_n(B)&\lesssim e^{3n\tau} r_{n,3}\min\{r,r_{n,2}\}\min\{r,r_{n,1}\}\frac{r}{(m^n-1)^{-1}}\\
          &<r^{\min\{\frac{\tau+3\log\lambda}{\tau+\log\lambda},\frac{2\log\lambda+\log
              m}{\tau+\log\lambda} ,\frac{\tau+\log\lambda}{\tau}
            \}}.
        \end{split}
      \end{equation*}
    \item If $r>S_n^{-1}$, then
      \[
        \mu_n(B)<r^3.
      \]
       
    \end{itemize}
    Therefore
    \[
      \dimh R_\tau\ge
      \min\Bigl\{\frac{\tau+3\log\lambda}{\tau+\log\lambda},\frac{3\log
        m}{\tau+\log m} ,\frac{2\log\lambda+\log
        m}{\tau+\log\lambda} ,\frac{\tau+\log\lambda}{\tau}
      \Bigr\}.
    \]

  \item When $\tau\le
    \frac{1}{2}\log\lambda$, 
    the inequalities
    \[
      r_{n,3}<(m^n-1)^{-1}<r_{n,2}<e^{n(\tau-\log\lambda)}<r_{n,1}
    \]
    hold for large $n$.
    \begin{itemize}
    \item If $r\le e^{n(\tau-\log\lambda)}$, then
      \begin{equation*}
        \begin{split}
          \mu_n(B)&\lesssim e^{3n\tau} r\min\{r,r_{n,2}\}\min\{r,r_{n,3}\}\frac{r}{\min\{r,(m^n-1)^{-1}\}}\\
          & <r^{\min\{\frac{3\log m}{\tau+\log m},
            \frac{\tau+3\log\lambda}{\tau+\log\lambda}\}}.
        \end{split}
      \end{equation*}
    \item If $r>e^{n(\tau-\log\lambda)}$, then
      \[
        \mu_n(B)<r^3.
      \]
    \end{itemize}
    In this case, we have
    \[
      \dimh R_\tau\ge \min\Bigl\{\frac{3\log m}{\tau+\log m},
      \frac{\tau+3\log\lambda}{\tau+\log\lambda}\Bigr\}.
    \]
  \end{itemize}
     
\item Case $\lambda<m\le \lambda^{3/2}$.

  It suffices to consider the case
  $\log m-\log\lambda<\tau\le \frac{1}{2}\log\lambda$, which
  gives
  \[
    \dimh R_\tau\ge \min\Bigl\{\frac{3\log m}{\tau+\log
      m},\frac{2\log\lambda+\log m}{\tau+\log\lambda},
    \frac{\tau+3\log\lambda}{\tau+\log\lambda}\Bigr\}.
  \]
  Combining all cases, we get that
  \begin{multline*}
    \dimh R_\tau\ge \min\Bigl\{\frac{3\log m}{\tau+\log
      m}, \frac{2\log\lambda+\log m}{\tau+\log\lambda}, \\
    \frac{\tau+3\log\lambda}{\tau+\log\lambda},\frac{\log
      m+\log\lambda}{\tau},\frac{\tau+\log\lambda}{\tau}\Bigr\},
  \end{multline*}
  when $m>\lambda$.
\end{enumerate}

{\bf Dealing with  Case (ii), $\boldsymbol{m\le \lambda}$.}

\begin{itemize}
\item When $\frac{1}{2}\log\lambda<\tau\le \log m $. In this
  case, for large $n$, we have
  \[
    r_{n,2}<r_{n,3}<(m^n-1)^{-1}<r_{n,1}<S_n^{-1}.
  \]

  Therefore
  \begin{itemize}
  \item if $r\le r_{n,2}$, we have
    \[
      \mu_n(B)\lesssim
      e^{3n\tau}r^3<r^{\frac{3\log\lambda}{\tau+\log\lambda}}.
    \]
  \item if $r_{n,2}<r\le r_{n,3}$, we have
    \[
      \mu_n(B)\lesssim e^{3n\tau} r^2r_{n,2}\asymp
      r^2e^{n(2\tau-\log\lambda)}<r^{\frac{2\log
          m+\log\lambda}{\tau+\log m}},
    \]
    where the last inequality holds since
    $\tau\ge \frac{1}{2}\log\lambda$ and
    $e^n<r^{-\frac{1}{\tau+\log m}}$.
  \item if $r_{n,3}<r\le (m^n-1)^{-1}$, we derive that
    \[
      \mu_n(B)\lesssim e^{3n\tau}rr_{n,2}r_{n,3}\asymp
      re^{n(\tau-\log m-\log\lambda)}<r^{\frac{2\log
          m+\log\lambda}{\tau+\log m}}.
    \]
    Here we used that $\tau \le\log m$ and
    $e^{-n}<r^{\frac{1}{\tau+\log m}}$.
  \item if $(m^n-1)^{-1}<r \le r_{n,1} $, note that
    $\tau\le \log m\le \log\lambda$ and $r_{n,3}<r$, then
    \[
      \mu_n(B)\lesssim
      e^{3n\tau}rr_{n,2}r_{n,3}\frac{r}{(m^n-1)^{-1}}\asymp
      r^2e^{n(\tau-\log\lambda)}<r^{\frac{2\log
          m+\log\lambda}{\tau+\log m}}.
    \]
  \item if $ r_{n,1} < r\le S_n^{-1}$, we have
    \[
      \mu_n(B)\lesssim e^{3n\tau} r_{n,1}
      r_{n,2}r_{n,3}\frac{r}{(m^n-1)^{-1}} \asymp
      re^{-n\log\lambda}<r^{\frac{\tau+\log\lambda}{\tau}}.
    \]
  \item if $S_n^{-1}<r $, we get
    \[
      \mu_n(B)\lesssim e^{3n\tau}r_{n,1} r_{n,2}r_{n,3}
      \frac{r^3}{(m^n-1)^{-1}(\lambda^n-1)^{-1}(1-\lambda^{-n})^{-1}}=r^3.
    \]
  \end{itemize}
  When $\frac{1}{2}\log\lambda < \tau \le \log m$, we obtain from
  the above estimates that
  \[
    \dimh R_\tau\ge
    \min\Bigl\{\frac{3\log\lambda}{\tau+\log\lambda},\frac{2\log
      m+\log\lambda}{\tau+\log
      m},\frac{\tau+\log\lambda}{\tau}\Bigr\}.
  \]
 
\item When $\log m <\tau $, for large $n$, we have
  \[
    r_{n,2}<r_{n,3}<r_{n,1}<(m^n-1)^{-1}<S_n^{-1}.
  \]
 
  Therefore
  \begin{itemize}
  \item if $r\le r_{n,3}$, similarly to the previous one, we have
    \[
      \mu_n(B)\lesssim
      e^{3n\tau}r^2\min\{r,r_{n,2}\}<r^{\min\{\frac{3\log\lambda}{\tau+\log\lambda},\frac{2\log
          m+\log\lambda}{\tau+\log m}\}}.
    \]
  \item if $r_{n,3}<r\le r_{n,1} $, we have
    \begin{equation*}
      \begin{split}
        \mu_n(B)&\lesssim e^{3n\tau}rr_{n,2}r_{n,3}\asymp re^{n(\tau-\log m-\log\lambda)}\\
        &<\begin{cases}
          r^{\frac{2\log m+\log\lambda}{\tau+\log m}}\quad &{\rm for}\, \log m<\tau\le \log m+\log\lambda \\
          r^{\frac{\log m+\log\lambda}{\tau}}\quad &{\rm for}\, \tau> \log m+\log\lambda \end{cases}\\
        &=r^{\min\{\frac{2\log m+\log\lambda}{\tau+\log m},
          \frac{\log m+\log\lambda}{\tau}\}}.
      \end{split}
    \end{equation*}
 
  \item if $r_{n,1}<r \le (m^n-1)^{-1}$, then
    \[
      \mu_n(B)\lesssim e^{3n\tau}r_{n,1}r_{n,2}r_{n,3}\asymp
      e^{-n(\log m+\log\lambda)}<r^{\frac{\log
          m+\log\lambda}{\tau}}.
    \]
  \item if $ (m^n-1)^{-1}< r$, we have
    \[
      \mu_n(B)\lesssim e^{3n\tau} r_{n,1}
      r_{n,2}r_{n,3}\frac{r}{(m^n-1)^{-1}}\frac{r^2}{\min\{S_n^{-2},r^2\}}
      <r^{\frac{\tau+\log\lambda}{\tau}}.
    \]
  \end{itemize}
  Now we conclude that if $\tau>\log m $,
  \[
    \dimh R_\tau\ge
    \min\Bigl\{\frac{3\log\lambda}{\tau+\log\lambda},\frac{2\log
      m+\log\lambda}{\tau+\log
      m},\frac{\tau+\log\lambda}{\tau},\frac{\log
      m+\log\lambda}{\tau}\Bigr\}.
  \]

\item When $\tau \le \frac{1}{2}\log\lambda
  $.  

  Since $m>\lambda^{1/2}$,
  $\log\lambda-\log m<\frac{1}{2}\log\lambda $. There are three
  sub-cases, depending on the size of $\tau$.

  If $\tau >\log \lambda-\log m$, then for large $n$, we have
  \[
    r_{n,2}<r_{n,3}<(m^n-1)^{-1}<e^{n(\tau-\log\lambda)}<r_{n,1}.
  \]

  Depending on the size of $r$ we consider two different cases.
  \begin{itemize}
  \item If $r\le e^{n(\tau-\log\lambda)}$, we have
    \begin{equation*}
      \begin{split}
        \mu_n(B)&\lesssim e^{3n\tau}r\min\{r,r_{n,2}\}\min\{r,r_{n,3}\}\frac{r}{\min\{r,(m^n-1)^{-1}\}}\\
        &<r^{\min\{\frac{3\log\lambda}{\tau+\log\lambda},\frac{2\log
            m+\log\lambda}{\tau+\log m}\}}.
      \end{split}
    \end{equation*}
  \item In case $ e^{n(\tau-\log\lambda)}<r$, the set $B$
    intersects
    \[
      \lesssim
      \frac{r}{e^{n(\tau-\log\lambda)}}\frac{r}{(m^n-1)^{-1}}
    \]
    ellipsoids, and then
    \[
      \mu_n(B)\lesssim e^{3n\tau}r_{n,2}r_{n,3}\min\{r,r_{n,1}\}
      \frac{r}{e^{n(\tau-\log\lambda)}}\frac{r}{(m^n-1)^{-1}}<r^3.
    \]
  \end{itemize}

  If
  $\frac{1}{2}(\log \lambda-\log m)<\tau \le \log \lambda-\log
  m$, for large $n$, we have
  \[
    r_{n,2}<r_{n,3}<e^{n(\tau-\log\lambda)}\le
    (m^n-1)^{-1}<r_{n,1}.
  \]
  We consider different cases, depending on the size of $r$.
  \begin{itemize}
  \item If $r\le e^{n(\tau-\log\lambda)}$, then
    \[
      \mu_n(B)\lesssim e^{3n\tau}r\min\{r, r_{n,3}\}\min\{r,
      r_{n,2}\}<r^{\min\{\frac{3\log\lambda}{\tau+\log\lambda},\frac{2\log
          m+\log\lambda}{\tau+\log m}\}}.
    \]
   
  \item If $e^{n(\tau-\log\lambda)}<r\le (m^n-1)^{-1}$, then
    \[
      \mu_n(B)\lesssim e^{3n\tau}rr_{n,2}
      r_{n,3}\frac{r}{e^{n(\tau-\log\lambda)}}<r^{\frac{2\tau+3\log
          m}{\tau+\log m}}.
    \]
    
  \item If $(m^n-1)^{-1}<r$, then
    \begin{equation*}
      \begin{split}
        \mu_(B)&\lesssim e^{3n\tau}r_{n,2} r_{n,3} \min\{r,
        r_{n,1}\} \frac{r}{e^{n(\tau-\log\lambda)}}
        \frac{r}{(m^n-1)^{-1}}\\
        &<r^3.
      \end{split}
    \end{equation*}
  \end{itemize}
  It follows from the fact
  $\tau>\frac{1}{2}(\log \lambda-\log m)$ that
  \begin{equation*}
    \begin{split}
      \mu_(B)&< r^{\min\{\frac{3\log\lambda}{\tau+\log\lambda},
        \frac{2\log m+\log\lambda}{\tau+\log m},
        \frac{2\tau+3\log m}{\tau+\log m}\}} \\
      &=r^{\min\{\frac{3\log\lambda}{\tau+\log\lambda},\frac{2\log
          m+\log\lambda}{\tau+\log m}\}}.
    \end{split}
  \end{equation*}

  If $\tau \le \frac{1}{2}(\log \lambda-\log m)$, for large $n$,
  we have
  \[
    r_{n,2}<e^{n(\tau-\log\lambda)}\le r_{n,3}<
    (m^n-1)^{-1}<r_{n,1}.
  \]
  Again, we split into three cases depending on the size of $r$.
  \begin{itemize}
  \item If $r\le e^{n(\tau-\log\lambda)}$, then
    \[
      \mu_n(B)\lesssim e^{3n\tau}r^2\min\{r,
      r_{n,2}\}<r^{\min\{\frac{3\log\lambda}{\tau+\log\lambda},\frac{2\log
          m+\log\lambda}{\tau+\log m}\}}.
    \]
  \item If $e^{n(\tau-\log\lambda)}<r\le (m^n-1)^{-1}$, then
    \[
      \mu_n(B)\lesssim e^{3n\tau}rr_{n,2}\min\{r,
      r_{n,3}\}\frac{r}{e^{n(\tau-\log\lambda)}}<r^{\frac{2\tau+3\log
          m}{\tau+\log m}}.
    \]
  \item If $(m^n-1)^{-1}<r$, then
    \[
      \mu_n(B)<r^3.
    \]
  \end{itemize}
  Notice that when $\tau \le \frac{1}{2}(\log \lambda-\log m)$,
  \[
    \min\Bigl\{\frac{3\log\lambda}{\tau+\log\lambda},\frac{2\log
      m+\log\lambda}{\tau+\log m},\frac{2\tau+3\log m}{\tau+\log
      m},3 \Bigr\}=\frac{3\log\lambda}{\tau+\log\lambda}.
  \]

\end{itemize}
We can now conclude Case~(ii). When $m\le \lambda$,
\[
  \dimh R_\tau\ge
  \min\Bigl\{\frac{3\log\lambda}{\tau+\log\lambda},\frac{2\log
    m+\log\lambda}{\tau+\log
    m},\frac{\tau+\log\lambda}{\tau},\frac{\log
    m+\log\lambda}{\tau}\Bigr\},
\]
and we are done.

\subsection*{Acknowledgements}
Zhangnan Hu is supported by China Postdoctoral Science Foundation (Grant No. 2023M743878), by the Postdoctoral Fellowship Program of CPSF under Grant
Number GZB20240848
and  by Science Foundation of China,
University of Petroleum, Beijing (Grant No. 2462023\-SZBH\-013) .

\bibliographystyle{amsplain}

\end{document}